\newtheorem{theorem}{Theorem}[section]
\newtheorem{lemma}[theorem]{Lemma} 
\newtheorem{proposition}[theorem]{Proposition}
\newtheorem{remark}[theorem]{Remark}
\numberwithin{equation}{section}
\newcommand{\eps}{\varepsilon}
\def\A{\mathcal A}
\def\Hvec{\mathbf{H}}
\def\Lvec{\mathbf{L}}
\def\nvec{\mathbf{n}}
\def\pvec{\mathbf{p}}
\def\Vvec{\mathbf{V}}
\def\vvec{\mathbf{v}}
\def\xvec{\mathbf{x}}
\def\yvec{\mathbf{y}}
\def\wvec{\mathbf{w}}
\def\Yvec{\mathbf{Y}}
\def\Fvec{\mathbf{F}}
\def\zvec{\mathbf{z}}
\def\fvec{\mathbf{f}}
\title[Least-squares approach for controllability problem for PDEs]{About Least-squares type approach to address direct and controllability problems}
\date{06-10-2015}
\author{Arnaud M\"unch}
\thanks{
Laboratoire de Math\'{e}matiques, Universit\'{e} Blaise Pascal (Clermont-Ferrand~2), UMR CNRS 6620, Campus des C\'ezeaux,  63177~Aubi\`ere, France. e-mail: {\tt arnaud.munch@math.univ-bpclermont.fr}. }
\author{Pablo Pedregal}
\thanks{
E.T.S. Ingenieros Industriales. Universidad de Castilla La Mancha.
Campus de Ciudad Real (Spain). e-mail: {\tt pablo.pedregal@uclm.es}.  Research supported in part by
MTM2007-62945 of the MCyT (Spain), and PCI08-0084-0424 of the JCCM (Castilla-La Mancha).
}
\begin{document}

\maketitle

\begin{abstract}
We discuss the approximation of distributed null controls for partial differential equations. The main purpose is to determine an approximation of controls that drives
the solution from a prescribed initial state at the initial time to the zero target at a prescribed final time. As a non trivial example, we mainly focus on the Stokes system for which the existence of square-integrable controls have been obtained in \textit{[Fursikov \& Imanuvilov, Controllability of Evolution Equations, 1996])} via Carleman type estimates. We introduce a least-squares formulation of the controllability problem, and we show that it allows the construction of strong convergent sequences of functions toward null controls for the Stokes system. 
The approach consists first in introducing a class of functions satisfying \textit{a priori} the boundary conditions in space and time - in particular the null controllability condition at time $T$ -, and then finding among this class one element satisfying the system. This second step is done by minimizing a quadratic functional, among the admissible corrector functions of the Stokes system. We also discuss briefly the direct problem for the steady Navier-Stokes system. The method does not make use of any duality arguments and therefore avoid the ill-posedness of dual methods, when parabolic type equation are considered.

 \end{abstract}

\tableofcontents


\section{Introduction}

\label{intro}

 Let $\Omega\subset \mathbb{R}^N$, $N=2$ or $N=3$ be a bounded connected open set whose boundary $\partial\Omega$ is Lipschitz.
   Let $\omega\subset \Omega$ be a (small) nonempty open subset, and assume that $T>0$.
   We use the notation $Q_T=\Omega\times(0,T), q_T=\omega\times (0,T)$, $\Sigma_T=\partial\Omega \times(0,T)$ and we denote by $\nvec=\nvec(\xvec)$ the outward unit normal to $\Omega$ at any point $\mathbf{x}\in\partial\Omega$. Bold letters and symbols denote vector-valued functions and spaces; for instance $\Lvec^2(\Omega)$ is the Hilbert space of the functions $\vvec = (v_1,\dots,v_N)$ with $v_i \in L^2(\Omega)$ for all $i$. 
   
   This work is related to the null controllability problem for the non-stationary Stokes system
\begin{equation}\label{stokes}
\left\{
\begin{aligned}
 &    \yvec_t  - \nu\Delta \yvec + \nabla \pi = \fvec\;1_\omega, \quad  \nabla\cdot\yvec=0  \quad \text{in}\ \ Q_T \\
  &   \yvec = \textbf{0} \quad \text{on}\ \ \Sigma_T, \quad \yvec(\cdot,0) = \yvec_0 \quad \text{in}\ \ \Omega
\end{aligned}
\right.
\end{equation}
   which describes a viscous incompressible fluid flow in the bounded domain $\Omega$. We use as a control function the density of external forces 
$\fvec = \fvec(\xvec, t)$ concentrated in the arbitrary subdomain $\omega$ during the time interval $(0,T)$; $\yvec$ is the vector field of the fluid velocity, and $\pi$ is the scalar pressure. The real $\nu$ denotes the constant viscosity of the fluid. The symbol $1_\omega$ stands for the characteristic function of $\omega$. We introduce the following spaces
\begin{equation}
\label{spaces}
\begin{aligned}
& \Hvec=\{\boldsymbol{\varphi}\in \Lvec^2(\Omega): \nabla\cdot \boldsymbol{\varphi}=0\,\textrm{ in }\,\Omega, \,\,\boldsymbol{\varphi}\cdot\nvec=0\;\textrm{on}\;\partial\Omega\},\\
& \Vvec=\{\boldsymbol{\varphi}\in\Hvec_0^1(\Omega): \nabla\cdot\boldsymbol{\varphi}=0\;\textrm{in}\;\Omega\}, \quad U=\biggl\{\psi\in L^2(\Omega): \int_{\Omega}\psi(\xvec)\,d\xvec=0\biggr\}.
\end{aligned}
\end{equation}   
Then, for any $\yvec_0\in \Hvec,~ T > 0$, and $\fvec \in \Lvec^2(q_T)$, there exists exactly one
solution~$(\yvec,\pi)$ of (\ref{stokes}) with the following regularity : 
$$\yvec \in C^0\left([0, T];\Hvec\right)\cap L^2\left(0, T;\Vvec\right), \,\,\pi \in L^2(0,T; U)$$
 (see \cite{Temam}). The null controllability problem for~(\ref{stokes}) at time $T$ is the following: 
 \begin{center}
 {\it
   For any~$\yvec_0\in \Hvec$,  find~$\fvec\in \Lvec^2(q_T)$ such that the corresponding solution to~\eqref{stokes} satisfies
\begin{equation}\label{null_condition}
\yvec(\cdot,T) = \mathbf{0} \ \ \hbox{in}\ \ \Omega.
\end{equation}
}
\end{center}
The controllability properties of evolution PDEs have attracted a lot of works in the last decades: some relevant references are \cite{coron, F-I1, LT1, JLL2, Russell}. In particular, the Stokes - and more generally the Navier-Stokes - system has received a lot of attention: we mention the references \cite{FC-G-P,Ima-JPP-Yamamoto}. The following result is proved in 
\cite{F-I-93} (see also \cite{fabre, F-I1, Imanuvilov-NS}) by the way of Carleman estimates. 
\begin{theorem}[Fursikov-Imanuvilov]\label{th-1}
   The linear system \emph{(\ref{stokes})} is null-controllable at any time $T>0$.
\end{theorem}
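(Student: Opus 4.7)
The plan is to reduce the null controllability of \eqref{stokes} to an observability inequality for the adjoint Stokes system and then to establish that inequality via a global Carleman estimate. First I would write down the adjoint problem: for a terminal datum $\boldsymbol{\varphi}_T\in\Hvec$, consider
\begin{equation*}
\left\{
\begin{aligned}
& -\boldsymbol{\varphi}_t -\nu\Delta \boldsymbol{\varphi}+\nabla\sigma=\mathbf{0}, \quad \nabla\cdot\boldsymbol{\varphi}=0\quad\text{in } Q_T,\\
& \boldsymbol{\varphi}=\mathbf{0}\quad \text{on } \Sigma_T,\qquad \boldsymbol{\varphi}(\cdot,T)=\boldsymbol{\varphi}_T\quad\text{in }\Omega.
\end{aligned}
\right.
\end{equation*}
A standard duality computation, multiplying \eqref{stokes} by $\boldsymbol{\varphi}$ and integrating by parts, shows that null controllability at time $T$ (i.e.\ the existence of $\fvec\in\Lvec^2(q_T)$ driving $\yvec_0$ to $\mathbf{0}$) is equivalent to the observability inequality
\begin{equation*}
\|\boldsymbol{\varphi}(\cdot,0)\|_{\Hvec}^2 \leq C_{obs}\int_0^T\!\!\int_\omega |\boldsymbol{\varphi}(\xvec,t)|^2\,d\xvec\,dt\qquad\forall\,\boldsymbol{\varphi}_T\in\Hvec.
\end{equation*}

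The main step, and the genuinely hard one, is to prove this observability inequality. The plan is to establish a global Carleman inequality for $\boldsymbol{\varphi}$ with weight functions of the form $\rho(\xvec,t)=\exp(s\alpha(\xvec,t))$, where $\alpha(\xvec,t)=(e^{\lambda\eta(\xvec)}-e^{2\lambda\|\eta\|_\infty})/(t(T-t))$ and $\eta\in C^2(\overline{\Omega})$ is an Imanuvilov-type function that is positive in $\Omega$, vanishes on $\partial\Omega$, and has no critical point outside the smaller subdomain $\omega_0\Subset\omega$. The difficulty specific to the Stokes setting is the pressure: one cannot directly apply the parabolic Carleman estimate to each component, because the coupling through $\nabla\sigma$ destroys the local structure. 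Following Fursikov--Imanuvilov, I would either apply the curl operator (in dimension two) or, more generally, bootstrap by first proving a Carleman estimate for a non-homogeneous heat equation satisfied by $\boldsymbol{\varphi}$, then absorbing the pressure term by using the divergence-free constraint and an elliptic Carleman estimate for $\sigma$ (which satisfies $\Delta\sigma=0$ in $Q_T$). Combining these, after choosing $\lambda$ large enough to absorb lower-order terms and $s$ large enough to dominate the local interior pressure contribution by an observation of $\boldsymbol{\varphi}$ on $\omega$, one obtains a weighted estimate of the form
\begin{equation*}
\iint_{Q_T}\rho^{-2}\bigl(s^3\lambda^4\xi^3|\boldsymbol{\varphi}|^2+s\lambda^2\xi|\nabla\boldsymbol{\varphi}|^2\bigr)\,d\xvec\,dt \le C\iint_{q_T}\rho^{-2}s^{k}\xi^{k}|\boldsymbol{\varphi}|^2\,d\xvec\,dt,
\end{equation*}
where $\xi=(t(T-t))^{-1}$ and $k$ is an explicit integer.

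From the Carleman estimate the observability inequality is deduced by standard energy arguments: one uses the backward dissipation $\tfrac{d}{dt}\|\boldsymbol{\varphi}\|_{\Hvec}^2\le 0$ (obtained by multiplying the adjoint system by $\boldsymbol{\varphi}$) to bound $\|\boldsymbol{\varphi}(\cdot,0)\|_\Hvec^2$ by an integral of $|\boldsymbol{\varphi}|^2$ over an interior time slab, which in turn is controlled by the weighted left-hand side above since $\rho^{-2}\xi^3$ is bounded below on compact subintervals of $(0,T)$. Once observability is known, the control is constructed via the HUM minimization of
\begin{equation*}
J(\boldsymbol{\varphi}_T)=\frac{1}{2}\iint_{q_T}|\boldsymbol{\varphi}|^2\,d\xvec\,dt+\langle \boldsymbol{\varphi}(\cdot,0),\yvec_0\rangle_{\Hvec}
\end{equation*}
over the completion of $\Hvec$ with respect to the seminorm defined by the right-hand side of the observability inequality; coercivity is exactly the observability, and the minimizer $\widehat{\boldsymbol{\varphi}}$ yields the null control $\fvec=\widehat{\boldsymbol{\varphi}}\,1_\omega$. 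The principal obstacle is the pressure handling in the Carleman step: decoupling $\boldsymbol{\varphi}$ from $\sigma$ while preserving the divergence-free condition and the boundary data requires the careful weighted elliptic estimate on $\sigma$, which is where the technical heart of \cite{F-I-93} lies.
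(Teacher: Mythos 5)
The paper does not prove Theorem \ref{th-1} itself; it quotes it from \cite{F-I-93} (see also \cite{F-I1}), where it is established precisely by the route you describe: reduction to an observability inequality for the adjoint Stokes system, a global Carleman estimate with Imanuvilov-type weights in which the pressure is handled via the harmonicity of $\sigma$ and the divergence-free constraint, and the HUM construction of the control. Your outline is a faithful summary of that standard argument (with the technical Carleman step sketched rather than carried out, which is unavoidable at this level of detail), so it takes essentially the same approach as the proof the paper relies on.
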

On the other hand, the (numerical) approximation of controls either distributed or located on the boundary for the Stokes system has received much less attention, due to the underlying ill-posedness of the approximation. In practice, such approximation is usually addressed in the framework of an optimal control reformulation. Precisely, one seeks to minimize the quadratic functional  $J(\fvec):=\frac{1}{2}\Vert \fvec\Vert^2_{\Lvec^2(q_T)}$ over the non-empty set 
$$
\mathcal{C}(\yvec_0,T)=\{ (\yvec,\fvec): \fvec\in \Lvec^2(q_T),\, \yvec \;\textrm{solves} \; (\ref{stokes})\; \textrm{and satisfies}\; (\ref{null_condition})\}.
$$
Following \cite{glowinski08}, duality arguments allow to replace this constrained minimization by the unconstrained minimization of its conjugate function $J^{\star}$ defined as 
\begin{equation}
J^{\star}(\boldsymbol{\varphi_T})=\frac{1}{2}\int\!\!\!\!\int_{q_T} \vert \boldsymbol{\varphi}\vert^2 \,d\xvec\,dt + \int_{\Omega} \yvec_0\cdot \boldsymbol{\varphi}(\cdot,0)\,d\xvec \nonumber
\end{equation}
over $\boldsymbol{\varphi_T}\in\boldsymbol{\mathcal{H}}$, where $(\boldsymbol{\varphi},\sigma)$ solves the adjoint backward Stokes system associated with (\ref{stokes}) : 
\begin{equation}\label{stokes_adjoint}
\left\{
\begin{aligned}
 &    -\boldsymbol{\varphi}_t  - \nu\Delta \boldsymbol{\varphi} + \nabla \sigma = \boldsymbol{0}, \quad  \nabla\cdot \boldsymbol{\varphi}=0  \quad \text{in}\ \ Q_T \\
  &   \boldsymbol{\varphi} = \textbf{0} \quad \text{on}\ \ \Sigma_T, \quad \boldsymbol{\varphi}(\cdot,T) = \boldsymbol{\varphi_T} \quad \text{in}\ \ \Omega
\end{aligned}
\right.
\end{equation}

$\boldsymbol{\mathcal{H}}$ is the Hilbert space defined as the completion of any smooth space functions included in $\Hvec$ for the norm 
$\Vert \boldsymbol{\varphi}\Vert_{\Lvec^2(q_T)}$. The control of minimal square-integrable norm is then given by $\fvec=\boldsymbol{\hat{\varphi}}\, 1_{\omega}$ where $\boldsymbol{\hat{\varphi}}$ is associated with the unique minimizer $\boldsymbol{\hat{\varphi}_T}$ in $\boldsymbol{\mathcal{H}}$ of $J^{\star}$ (see \cite{coron, glowinski08}). 
The difficulty, when one wants to approximate such control in any finite dimensional space, that is when one likes to minimize numerically $J^{\star}$, is that the space $\boldsymbol{\mathcal{H}}$ is huge, in particular, contains $\cup_{s\in \mathbb{N}}\Hvec^{-s}(\Omega)$, and even elements that may not be distributions. 
Numerical experiments do suggest that the minimizer $\boldsymbol{\hat{\varphi}_T}$ is very singular (we refer to \cite{EFC-AM-I, EFC-AM-II, AM-EZ} for a detailed analysis in the close case of the heat equation). This phenomenon is independent of the choice of $J$, but is related to the use of dual variables. Actually, the equality (\ref{null_condition}) can be viewed as an equality in a very small space (due to the strong regularization effect of the heat kernel). Accordingly, the associated multiplier $\boldsymbol{\varphi_T}$ belong to a large dual space, much larger than $\Lvec^2(\Omega)$, that is hard to represent (numerically) in any finite dimensional space.


An alternative way of looking at these problems and avoiding the introduction of dual variables has been proposed in \cite{pedregal}. It is based on the following simple strategy. Instead of working all the time with solutions of the underlying state equation, and looking for one that may comply with the final desired state, one considers a suitable class of functions complying with required initial, boundary, final conditions and appropriate regularity, and seeks one of those that is a solution of the state equation. This is in practice accomplished by setting up an error functional defined for all feasible functions, and measuring how far those are from being a solution of the underlying state equation. 

One advantage of this variational approach is that the way to get closer to a solution of the problem is by minimizing a functional that cannot get stuck on local minima because the only critical points of the error turn out to be global minimizers with zero error. Therefore a general strategy for approximation consists in using a descent algorithm for this error functional. This approach which has the flavor of a least-squares type method has been employed successfully in our null controllability context for the linear heat equation in \cite{AM-PP} and for an hyperbolic system in \cite{munch-eect}. 

We apply this approach to the Stokes system. In Section \ref{sec2}, we describe the ingredients of the variational approach for the system (\ref{stokes}) and reduce the search of one controlled trajectory for the Stokes system to the minimization of the quadratic functional $E$ defined by (\ref{foncE}) over the affine space $\A$ defined by (\ref{spaceA}).
In Section \ref{sec3}, by a general-purpose lemma (Lemma \ref{basic}), using the very specific structure of the functional $E$, we prove that we may construct minimizing sequences for the error functional $E$ that do converge strongly to an extremal point for $E$ (see Proposition \ref{convergence}).  Section \ref{sec4}, we adapt the argument for the direct problem of the steady Navier-Stokes system. 

Results of this work were partially announced in the note \cite{AM-PP-crasstokes}.

\section{A least-squares reformulation}\label{sec2}

Following \cite{AM-PP,pedregal}, we define the non-empty space 
\begin{equation}\label{spaceA}
\begin{aligned}
 \A= \biggl\{(\yvec, \pi, \fvec); \;\; &\yvec\in
 \Lvec^2(0,T,\Hvec_0^1(\Omega)), \yvec_t\in \Lvec^2(0,T; \Hvec^{-1}(\Omega)), \\ 
 & \yvec(\cdot,0)=\yvec_0,\; \yvec(\cdot,T)=\textbf{0},\; \pi\in L^2(0,T; U),\; \fvec\in \Lvec^2(q_T)\biggr\}.
\end{aligned}
\end{equation}
These hypotheses on $\yvec$ imply that it belongs to $C([0,T], \Lvec^2(\Omega))$ and give a meaning to the equalities $\yvec(\cdot,0)=\yvec_0$, $\yvec(\cdot,T)=\textbf{0}$ in $\Lvec^2(\Omega)$. Note also that $\A$ is defined in agreement with the regularity of any solution $(\yvec,\pi)$ of the Stokes system with a source term $\fvec\in \Lvec^2(Q_T)$. Then, we define the functional $E:\A\to\mathbb{R}^+$ by 
\begin{equation}\label{foncE}
E(\yvec,\pi,\fvec)=\frac{1}{2}\int\!\!\!\!\int_{Q_T} (\vert \vvec_t\vert^2+ \vert\nabla \vvec\vert^2 + \vert\nabla\cdot\yvec\vert^2)\,d\xvec\,dt
\end{equation}
where the corrector $\vvec$ is the unique solution in $\Hvec^{1}(Q_T)$ of the (elliptic) boundary value problem
\begin{equation}\label{corrector_unsteady}
 \left\{
 \begin{aligned}
 & -\vvec_{tt}-\Delta \vvec + (\yvec_t-\nu \Delta \yvec + \nabla \pi -\fvec\,1_{\omega}) =0, \quad \textrm{in}\quad Q_T,\\
 & \vvec=0\quad \textrm{on}\quad\Sigma_T, \quad \vvec_t=0 \quad \textrm{on}\quad\Omega\times \{0,T\}.
 \end{aligned}
 \right.
 \end{equation}
 For any $(\yvec,\pi,\fvec)\in \A$, the term $\yvec_t-\nu \Delta \yvec + \nabla \pi -\fvec\,1_{\omega}$ belongs to $\Lvec^2(0,T; \Hvec^{-1}(\Omega))$ so that the functional $E$ is well-defined in $\A$. The approach developed here is based on the following result. 
 \begin{proposition}\label{Prop1}
 $(\yvec,\pi)$ is a controlled solution of the Stokes system (\ref{stokes}) by the control function $\fvec\,1_{\omega}\in \Lvec^2(q_T)$ if and only if $(\yvec,\pi,\fvec)$ is a solution of the extremal problem : 
\begin{equation} \label{extremal_pb}
\inf_{(\yvec,\pi,\fvec)\in \A} E(\yvec,\pi,\fvec).
\end{equation} 
\end{proposition}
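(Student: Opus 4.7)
The plan is to show that the infimum in \eqref{extremal_pb} equals zero and is attained exactly on controlled trajectories. I would split the statement into two implications, with the ``only if'' direction being the one that carries the bulk of the argument.

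For the ``if'' direction (controlled solution implies extremal), I would simply substitute a controlled pair $(\yvec,\pi,\fvec)$ into the definitions. Since such a pair satisfies $\yvec_t-\nu\Delta\yvec+\nabla\pi-\fvec\,1_{\omega}=0$ in $\Lvec^2(0,T;\Hvec^{-1}(\Omega))$ and $\nabla\cdot\yvec=0$ in $L^2(Q_T)$, the elliptic problem \eqref{corrector_unsteady} for the corrector becomes $-\vvec_{tt}-\Delta\vvec=0$ with $\vvec=0$ on $\Sigma_T$ and $\vvec_t=0$ at $t=0,T$. By well-posedness of this problem in $\Hvec^1(Q_T)$, the unique solution is $\vvec\equiv 0$, hence $E(\yvec,\pi,\fvec)=0$. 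Since $E\ge 0$ on $\A$, this is the minimum.

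For the ``only if'' direction, the key is that the infimum of $E$ over $\A$ is zero. This uses Theorem \ref{th-1}: Fursikov--Imanuvilov's controllability result provides a controlled trajectory whose regularity can be arranged to place it in $\A$, and we just saw that $E$ vanishes on it. Hence any minimizer $(\yvec,\pi,\fvec)\in\A$ satisfies $E(\yvec,\pi,\fvec)=0$. From the non-negative integrand, this forces $\vvec_t\equiv 0$, $\nabla\vvec\equiv 0$ and $\nabla\cdot\yvec\equiv 0$ in $L^2(Q_T)$. The first two identities combined with the boundary condition $\vvec=0$ on $\Sigma_T$ imply $\vvec\equiv 0$ in $Q_T$. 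Plugging $\vvec=0$ back into \eqref{corrector_unsteady} gives $\yvec_t-\nu\Delta\yvec+\nabla\pi-\fvec\,1_{\omega}=0$ as an identity in $\Lvec^2(0,T;\Hvec^{-1}(\Omega))$. Combined with $\nabla\cdot\yvec=0$, the boundary condition $\yvec=0$ on $\Sigma_T$ (built into $\A$), the initial condition $\yvec(\cdot,0)=\yvec_0$ and the controllability constraint $\yvec(\cdot,T)=\mathbf{0}$ (also built into $\A$), we conclude that $(\yvec,\pi)$ solves \eqref{stokes} with control $\fvec\,1_{\omega}$ and satisfies \eqref{null_condition}.

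The main point requiring care, rather than a deep obstacle, is the passage from $\vvec\equiv 0$ back to the vanishing of the residual $\yvec_t-\nu\Delta\yvec+\nabla\pi-\fvec\,1_{\omega}$ in $\Lvec^2(0,T;\Hvec^{-1}(\Omega))$: one must check that the corrector map $R\mapsto \vvec$ is injective on this space. This follows because the elliptic equation $-\vvec_{tt}-\Delta\vvec=-R$ with the stated boundary conditions admits for each $R\in\Lvec^2(0,T;\Hvec^{-1}(\Omega))$ a unique weak solution in $\Hvec^1(Q_T)$ (Lax--Milgram on the natural energy space), so $\vvec=0$ forces $R=0$. Everything else is just bookkeeping to verify that the regularity of an element of $\A$ makes each term in the corrector equation meaningful, which is explicitly noted in the paragraph preceding the proposition.
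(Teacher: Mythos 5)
Your proof is correct, but it takes a genuinely different --- and more elementary --- route than the paper's. You exploit the fact that, by Theorem \ref{th-1} together with your ``if'' direction, the infimum in \eqref{extremal_pb} is zero and attained; hence any minimizer satisfies $E=0$, the vanishing of the non-negative integrand forces $\vvec\equiv 0$ and $\nabla\cdot\yvec\equiv 0$, and the injectivity of the residual-to-corrector map (which you rightly flag and justify via Lax--Milgram and density of test functions) converts $\vvec\equiv 0$ into the vanishing of $\yvec_t-\nu\Delta\yvec+\nabla\pi-\fvec\,1_{\omega}$. The paper never uses the value of the infimum in the converse direction: it computes the first variation $\langle E^{\prime}(\yvec,\pi,\fvec),(\Yvec,\Pi,\Fvec)\rangle$ over the tangent space $\A_0$, shows that the optimality condition forces the corrector to solve a backward Stokes-type system with $\vvec=0$ on $q_T$, and then invokes the unique continuation property for the Stokes system (Fabre, Fabre--Lebeau) to conclude $\vvec\equiv 0$ and $\nabla\cdot\yvec$ constant, hence zero. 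What this heavier argument buys is the strictly stronger conclusion recorded in the remark following the proposition: every \emph{stationary} point of $E$ over $\A$ --- not merely every global minimizer --- has zero error, which is what guarantees that a descent algorithm cannot stall at a spurious critical point. That conclusion cannot be recovered from your argument, since you use attainment of the zero infimum in an essential way; but for the proposition as literally stated, your proof is complete and avoids unique continuation entirely.
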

\par\noindent\textsc{Proof-} From the controllability of the Stokes system  given by Theorem \ref{th-1}, the extremal problem (\ref{extremal_pb}) is well-posed in the sense that the infimum, equal to zero, is reached by any controlled solution of the Stokes system. Note that, without additional assumptions, the minimizer is not unique. Conversely, we check that any minimizer for $E$ is a solution of the (controlled) Stokes system:  let $(\Yvec,\Pi,\Fvec)\in \A_0$ be arbitrary where 
\begin{equation}\label{spaceA0}
\begin{aligned}
 \A_0= \biggl\{(\yvec, \pi, \fvec); \; &\yvec\in
 \Lvec^2(0,T,\Hvec_0^1(\Omega)),\; \yvec_t\in \Lvec^2(0,T; \Hvec^{-1}(\Omega)), \\ 
 & \yvec(\cdot,0)=\yvec(\cdot,T)=\textbf{0}, \;\pi\in L^2(0,T; U),\;\fvec\in \Lvec^2(q_T)\biggr\}.
\end{aligned}
\end{equation}
The first variation of $E$ at the point $(\yvec,\pi,\fvec)$ in the admissible direction $(\Yvec,\Pi,\Fvec)$
defined by
\begin{equation}
\begin{aligned}
\langle E^{\prime}(\yvec,\pi,\fvec),(\Yvec,\Pi,\Fvec)\rangle= \lim_{\eta\to 0} \frac{E( (\yvec,\pi,\fvec)+\eta (\Yvec,\Pi,\Fvec))-E(\yvec,\pi,\fvec)}{\eta},
\end{aligned}
\end{equation}
exists, and is given by 
\begin{equation}
\langle E^{\prime}(\yvec,\pi,\fvec),(\Yvec,\Pi,\Fvec)\rangle=\int\!\!\!\!\int_{Q_T} \biggl( \vvec_t\cdot \Vvec_t + \nabla\vvec\cdot \nabla\Vvec+ (\nabla\cdot \yvec)(\nabla\cdot\Yvec)    \biggr)\,d\xvec\,dt
\end{equation}
where the corrector $\Vvec\in \Hvec^1(Q_T)$, associated with $(\Yvec,\Pi,\Fvec)$, is the unique solution of 
\begin{equation}\label{corrector_unsteady_V}
 \left\{
 \begin{aligned}
 & -\Vvec_{tt}-\Delta \Vvec + (\Yvec_t-\nu \Delta \Yvec + \nabla \Pi -\Fvec\,1_{\omega}) =0 \quad \textrm{in}\quad Q_T,\\
 & \Vvec=0 \quad \textrm{on}\quad\Sigma_T, \quad\Vvec_t=0\quad \textrm{on}\quad\Omega\times \{0,T\}.
 \end{aligned}
 \right.
 \end{equation}
 Multiplying the main equation of this system by $\vvec$ (recall that $\vvec$ is the corrector associated with the minimizer $(\yvec, \pi, \fvec)$), integrating by parts, and using the boundary conditions on $\vvec$ and $\Vvec$, we get 
 %
%
\begin{equation}\label{Eprime}
 \begin{aligned}
 \langle E^{\prime}(\yvec,\pi,\fvec),(\Yvec,\Pi,\Fvec)\rangle =  -\int\!\!\!\!\int_{Q_T}& (-\Yvec\cdot\vvec_t + \nu \nabla\Yvec\cdot\nabla \vvec- \Pi\,\nabla\cdot\vvec-\Fvec\cdot\vvec\,1_{\omega})   \,d\xvec\,dt \\
 &+ \int\!\!\!\!\int_{Q_T}(\nabla\cdot\yvec)(\nabla \cdot \Yvec)\,d\xvec\,dt, \qquad \forall (\Yvec,\Pi,\Fvec)\in \A_0,
 \end{aligned}
 \end{equation}
 where we have used that  
 $$
 -\int_0^T \langle\Yvec_t,\vvec\rangle_{\Hvec^{-1}(\Omega),\Hvec^1(\Omega)} \,dt=\int\!\!\!\!\int_{Q_T} \Yvec\cdot \vvec_t\,d\xvec\,dt - \int_\Omega [\Yvec\cdot\vvec]_0^T\,d\xvec = \int\!\!\!\!\int_{Q_T} \Yvec\cdot\vvec_t\,d\xvec\,dt,
 $$ 
 and that   
 $$
 \int_0^T  \langle\nabla \Pi, \vvec\rangle_{\Hvec^{-1}(\Omega),\Hvec^1(\Omega)} dt = -\int\!\!\!\!\int_{Q_T} \Pi\;\nabla\cdot \vvec\,d\xvec\,dt. 
 $$
 Therefore if $(\yvec,\pi,\fvec)$ minimizes $E$, the equality $\langle E^{\prime}(\yvec,\pi,\fvec),(\Yvec,\Pi,\Fvec)\rangle=0$ for all $(\Yvec,\Pi,\Fvec)\in \A_0$ implies that the corrector $\vvec=\vvec(\yvec,\pi,\fvec)$ solution of (\ref{corrector_unsteady}) satisfies the conditions
\begin{equation}\label{corrector_unsteady_control}
 \left\{
 \begin{aligned}
 & \vvec_{t}+\nu\Delta \vvec -\nabla(\nabla\cdot \yvec)=0, \quad \nabla\cdot\vvec=0, & \textrm{in}\quad Q_T,\\
 & \vvec=0, & \textrm{in}\quad q_T.
 \end{aligned}
 \right.
 \end{equation}
 But from the unique continuation property for the Stokes system (see \cite{fabre, fabre-lebeau}), it turns out that $\vvec=0$ in $Q_T$ and that $\nabla\cdot\yvec$ is a constant in $Q_T$. Eventually, the relation (\ref{Eprime}) is then reduced to 
 $$
 \langle E^{\prime}(\yvec,\pi,\fvec),(\Yvec,\Pi,\Fvec)\rangle=   (\nabla\cdot\yvec)\int\!\!\!\!\int_{Q_T}\nabla \cdot \Yvec\,d\xvec\,dt =0, \quad \forall (\Yvec,\Pi,\Fvec)\in\A_0
 $$ 
and  then implies that this constant is zero. Consequently, if $(\yvec,\pi,\fvec)\in \A$ is a minimizer for $E$, then $\nabla\cdot\yvec=0$ in $Q_T$, and the corrector $\vvec$ is zero in $Q_T$, so that $E(\yvec,\pi,\fvec)=0$. Therefore, $(\yvec,\pi,\fvec)$ solves (\ref{stokes}), and since $(\yvec,\pi,\fvec)\in \A$, the state $\yvec$ is controlled at the time $T$ by the function $\fvec$ which acts as a control distributed in $\omega$. \qed

\vskip 0.1cm
\begin{remark}
The proof of Proposition \ref{Prop1} only utilizes optimality of $(\yvec,\pi,\fvec)$ and not its minimality. Therefore in the statement of the proposition, we could have written instead :
$(\yvec,\pi)$ is a controlled solution of the Stokes system (\ref{stokes}) by the control function $\fvec\,1_{\omega}\in \Lvec^2(q_T)$ if and only if $(\yvec,\pi,\fvec)$ is a stationary point for the functional $E(\yvec,\pi,\fvec)$ over  $(\yvec,\pi,\fvec)\in \A$. This is relevant from the perspective of the numerical simulation for it guarantees that the numerical procedure based on a descent strategy cannot get stuck in local minima.
\end{remark}
 \vskip 0.1cm
\begin{remark}
 For any $(\yvec,\pi,\fvec)\in\A$,  the cost $E$ can be formulate as follows
 $$
 E(\yvec,\pi,\fvec)=\frac{1}{2}\Vert \yvec_t-\nu\Delta\yvec+\nabla\pi-\fvec\,1_{\omega}\Vert^2_{\Hvec^{-1}(Q_T)} + \frac{1}{2}\Vert \nabla\cdot\yvec\Vert^2_{\Lvec^2(Q_T)}.
 $$
 This justifies the least-squares terminology. The use of least-squares type approaches to solve linear and nonlinear problem is not new and we refer to \cite{Bochev, glowinski83, gunzburger} for many applications. The use of least-squares type approaches in the controllability context comes from \cite{pedregal}.  
\end{remark}
 \vskip 0.1cm
\begin{remark} \label{divepsilon}
The quasi-incompressibility case is obtained in the same way. It suffices to add $\epsilon \pi$ (for any $\epsilon>0$) to the divergence term in the functional $E$. This is also in practice a classical trick to fix the constant of the pressure $\pi$ (see Section \ref{sec4}). 
\end{remark}
\vskip 0.1cm
\begin{remark}\label{compact_time_support}
The approach allows to consider compact support control jointly in time and space. It suffices to replace the function $1_{\omega}$ in (\ref{stokes}) by any compact support function in time and space, say $1_{\tilde{q}_T}$, where $\tilde{q}_T$ denotes a non-empty subset of $Q_T$. Since Theorem \ref{th-1} holds for any controllability time $T$ and any subset $\omega$ of $\Omega$, the controllability of (\ref{stokes}) remains true as soon as $\tilde{q}_T$ contains any non-empty cylindrical domain of the form $\omega_1\times (t_1,t_2) \subset \Omega\times (0,T)$. 
\end{remark}
\vskip 0.1cm
\begin{remark} \label{directproblem}
\textit{A fortiori}, the approach is well-adapted to address the direct problem (which consists, $\fvec$ being fixed, in solving the boundary value problem (\ref{stokes}): it suffices to remove from $\A$ and $\A_0$ the condition (\ref{null_condition}), and fix the forcing term $\fvec$ (see \cite{gunzburger} using a similar least-squares type point of view). In that case, the second line of (\ref{corrector_unsteady_control}) is replaced by $\vvec(\cdot,T)=0$, which implies with the first line, that $\vvec$ and $\nabla\cdot \yvec=0$ both vanish in $Q_T$. 
\end{remark}

It is worth to notice that this approach allows to treat at the same time the null controllability constraint and the incompressibility one. In this sense, the pair $(\pi, \fvec)$ can be regarded as a control function for the set of constraints
\begin{equation}
\yvec(\cdot,T)=0 \quad \textrm{on}\quad \Omega, \qquad \nabla\cdot \yvec=0 \quad \textrm{in}\quad Q_T.  \label{eq:contraint_gen}
\end{equation}
These two conditions are compatibles: there is no competition between them. In the uncontrolled situation, from the uniqueness, the pressure $\pi$ is unique as soon as the source term (here $\fvec 1_{\omega}$) is fixed. On the other hand, in our controllability context, the pair $(\pi,\fvec)$ is not unique: the pressure $\pi$ depends on $\fvec\,1_{\omega}$ and \textit{vice versa}. Therefore, the optimization with respect to both variables at the same time makes sense.  From this point of view, we may reformulate the problem as a general controllability problem for the heat equation: 
\[
\yvec_t-\nu \Delta \yvec = \boldsymbol{V} := \fvec\, 1_{\omega} - \nabla \pi \quad \textrm{in}\quad  Q_T, 
\]
$\boldsymbol{V}$ being a control function such that (\ref{eq:contraint_gen}) holds. The control function $\boldsymbol{V}$ acts on the whole domain, but on the other hand, should take the specific form $\boldsymbol{V}:= \fvec\,1_{\omega}-\nabla \pi$. 

Again, this perspective is different with the classical one, which consists in finding a control $\vvec\in \Lvec^2(q_T)$, such that $\yvec(\cdot,T)=0$ in $\Omega$ where $(\yvec,\pi)$  solves (\ref{stokes}).  This can done by duality, penalization technique, etc. Conversely, one may also consider iteratively first the null controllability constraint, that is, for any $\pi$ fixed in $L^2(0,T; U)$, find a control $\fvec_{\pi} \, 1_{\omega}$ such that (\ref{null_condition}) holds, and then find the pressure $\pi$ such that $\nabla\cdot \yvec_{\pi}=0$ holds in $Q_T$. Using again a least-squares type approach (for the heat equation, as developed in \cite{AM-PP}), the first step reduces to solve, for any $\pi\in L^2(0,T,U)$ fixed, the problem 
\[
\inf_{(\yvec_{\pi},\fvec_{\pi})\in \A_1} \tilde{E}(\yvec,\fvec):= \frac{1}{2}\Vert \vvec\Vert^2_{\Hvec^1(Q_T)}
\]
where $\vvec=\vvec(\yvec_{\pi},\pi,\fvec_{\pi})$ solves (\ref{corrector_unsteady}) and $\A_1$ is given by 
\[
 \A_1= \biggl\{(\yvec, \fvec); \;\; \yvec\in
 \Lvec^2(0,T,\Hvec_0^1(\Omega)), \yvec_t\in \Lvec^2(0,T; \Hvec^{-1}(\Omega)),  \yvec(\cdot,0)=\yvec_0,\; \yvec(\cdot,T)=\textbf{0},  \; \fvec\in \Lvec^2(q_T)\biggr\}.
\]
The second step consists in updating the pressure according to a descent direction for the function $G:L^2(0,T,U)\to \mathbb{R}$ defined by $G(\pi):=1/2 \Vert \nabla\cdot \yvec_{\pi}\Vert^2_{L^2(Q_T)}$.
We get that the first variation of $G$ at $\pi$ in the direction $\overline{\pi}\in L^2(0,T; U)$ is given by $<G^{\prime}(\pi), \overline{\pi}> = \int\!\!\!\int_{Q_T} \nabla\overline{\pi}\cdot \pvec\,dx\,dt$ where $\pvec$ solves
\[
-\pvec_t-\nu \Delta \pvec =\nabla (\nabla\cdot \yvec_{\pi}) \quad\textrm{in}\quad Q_T, \quad \pvec(\cdot,T)=0\quad\textrm{in}\quad\Omega,\quad \pvec=0\quad \textrm{on} \quad\Sigma_T.
\]
Again, this direct problem may be solved within the variational approach developed in this work (see Remark \ref{directproblem}).

\section{Convergence of some minimizing sequences for $E$} \label{sec3}

Proposition \ref{Prop1} reduces the approximation of a null control for (\ref{stokes}) to a minimization of the functional $E$ over $\A$. 
As a preliminary step, since $\A$ is not a vectorial space, we remark that any element of $\A$ can be written as the sum of one element of $\A$, say $\mathbf{s}_{\A}$, plus any element of the vectorial space $\A_0$. Thus, we consider for any ${\bf s_\A}:=(\yvec_\A,\pi_\A,\fvec_\A)\in \A$ the following problem: 
\begin{equation}
\min_{(\yvec,\pi,\fvec)\in \A_0}   E_{\bf s_\A}(\yvec,\pi,\fvec), \qquad E_{\bf s_\A}(\yvec,\pi,\fvec):= E({\bf s_\A} + (\yvec,\pi,\fvec)).   \label{extremal_pb_bis}
\end{equation}
Problems (\ref{extremal_pb}) and (\ref{extremal_pb_bis}) are equivalent. Any solution of Problem (\ref{extremal_pb_bis}) is a solution of the initial problem (\ref{extremal_pb}).
Conversely, any solution of Problem (\ref{extremal_pb}) can be decomposed as the sum $\mathbf{s}_\A + \mathbf{s}_{\A_0}$, for some $\mathbf{s}_{\A_0}$ in $\A_0$.

We endow the vectorial space $\A_0$ with its natural norm $\Vert \cdot\Vert_{\A_0}$ such that : 
\begin{equation}
\Vert \yvec,\pi,\fvec \Vert^2_{\A_0}:=  \int\!\!\!\!\int_{Q_T} (\vert \yvec\vert^2 + \vert\nabla \yvec\vert^2) \, d\xvec\,dt + \int_0^T \Vert \yvec_t(\cdot,t)\Vert^2_{\Hvec^{-1}(\Omega)} dt + \int\!\!\!\!\int_{Q_T} ( \vert \fvec\vert^2 + \vert \pi\vert^2 )\, d\xvec\,dt,
\end{equation}
recalling that $\Vert \yvec_t\Vert_{\Hvec^{-1}(\Omega)}=\Vert \bf{g}\Vert_{H_0^1(\Omega)}$ where $\mathbf{g}\in \Hvec_0^1(\Omega)$ solves $-\mathbf{\Delta} \mathbf{g} = \yvec_t$ in $\Omega$. We denote $\langle , \rangle_{\A_0}$ the corresponding scalar product. $(\A_0, \Vert\cdot\Vert_{\A_0})$ is an Hilbert space.

 The relation (\ref{Eprime}) allows to define a minimizing sequence in $\A_0$ for $E_{\bf s_\A}$. 
 
It turns out that minimizing sequences for $E_{\mathbf{s}_\A}$
 which belong to a precise subset of $\A_0$ remain bounded uniformly. This very valuable property is not \textit{a priori} guaranteed from the definition of  $E_{\mathbf{s}_\A}$. The boundedness of $E_{\mathbf{s}_\A}$ implies only the boundedness of the corrector $\vvec$ for the $\Hvec^1(Q_T)$-norm and the boundedness of the divergence $\nabla\cdot\yvec$ of the velocity field for the $L^2(Q_T)$-norm. Actually, this property is due to the fact that the functional $E_{\mathbf{s}_\A}$ is invariant in the subset of $\A_0$ which satisfies the state equations of (\ref{stokes}). 
 
In order to construct a minimizing sequence bounded in $\A_0$ for $E_{\mathbf{s}_\A}$, we introduce the linear continuous operator $\mathbf{T}$ which maps a triplet $(\yvec,\pi,\fvec)\subset \A$ into the corresponding vector $(\vvec, \nabla\cdot \yvec)\in \Hvec^1(Q_T)\times L^2(Q_T)$, with the corrector $\vvec$ as defined by (\ref{corrector_unsteady}). Then we define the space $A=\textrm{Ker}\,\mathbf{T} \cap \A_0$ composed of the elements $(\yvec,\pi,\fvec)$ satisfying the Stokes system and such that $\yvec$ vanishes on the boundary $\partial Q_T$. Note that $A$ is not the trivial space : it suffices to consider the difference of two distinct null controlled solutions of (\ref{stokes}). Finally, we note $A^\perp = (\textrm{Ker}\, \mathbf{T} \cap \A_0)^{\perp}$ the orthogonal complement of $A$ in $\A_0$ and $P_{A^\perp}: \A_0\to A^{\perp}$ the (orthogonal) projection on $A^{\perp}$.

We then define the following minimizing sequence  ${(\yvec^k,\pi^k,\fvec^k)}_{k\geq 0}\in A^\perp$ as follows: 
\begin{equation}
\label{minimizing_sequence}
\left\{
\begin{aligned}
& (\yvec^0,\pi^0,\fvec^0)\,\,\textrm{given in}\,\,A^\perp, \\
& (\yvec^{k+1},\pi^{k+1},\fvec^{k+1}) = (\yvec^{k},\pi^{k},\fvec^{k}) - \eta_k P_{A^{\perp}}(\overline{\yvec}^k,\overline{\pi}^k,\overline{\fvec}^k), \quad k\geq 0 
\end{aligned}
\right.
\end{equation}
where $(\overline{\yvec}^k,\overline{\pi}^k,\overline{\fvec}^k)\in \A_0$ is defined as the unique solution of the formulation
 \begin{equation}\label{FVdes}
 \langle(\overline{\yvec}^k,\overline{\pi}^k,\overline{\fvec}^k),(\Yvec,\Pi,\Fvec)\rangle_{\A_0}=  \langle E_{\bf s_0}^{\prime}(\yvec^{k},\pi^{k},\fvec^{k}),(\Yvec,\Pi,\Fvec)\rangle, \quad \forall (\Yvec,\Pi,\Fvec)\in \A_0.
 \end{equation}
 $\eta_k$ denotes a positive descent step. In particular, (\ref{FVdes}) implies that $\overline{\pi}^k=-\nabla\cdot\vvec^k\in L^2(Q_T)$ and $\overline{\fvec}^{k}=-\vvec^{k}\,1_{\omega}\in \Lvec^2(q_T)$ (actually in $\Hvec^1(q_T)$). 
 
 One main issue of our variational approach is to establish the convergence of the minimizing sequence defined by (\ref{minimizing_sequence}). We have the following result.
\vskip 0.1cm
\begin{proposition}\label{convergence}
For any $\bf{s_\A}\in \A$ and any $\{\yvec^0,\pi^0,\fvec^0\}\in A^\perp$, the sequence $\bf{s_\A} + \{(\yvec^k,\pi^k,\fvec^k)\}_{k\geq 0} \in \A$ converges strongly to a solution of the extremal problem (\ref{extremal_pb}).
\end{proposition}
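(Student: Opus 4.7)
The approach is to recognize $E_{\mathbf{s}_\A}$ as a convex, quadratic, nonnegative functional on the Hilbert space $\A_0$ and analyze the gradient iteration (\ref{minimizing_sequence}) after quotienting out its degeneracy. Equipping $W := \Hvec^1(Q_T) \times L^2(Q_T)$ with the norm for which $E = \frac{1}{2}\|\cdot\|_W^2\circ \mathbf{T}$, the linearity of the corrector map gives $E_{\mathbf{s}_\A}(u) = \frac{1}{2}\|\mathbf{T}\mathbf{s}_\A + \mathbf{T}u\|_W^2$, whose only degeneracy inside $\A_0$ is exactly $A = \ker \mathbf{T} \cap \A_0$. The first observation I would record is that $E_{\mathbf{s}_\A}$ is invariant under translations by elements of $A$, hence its Fréchet gradient — whose Riesz representative $(\overline{\yvec}^k,\overline{\pi}^k,\overline{\fvec}^k)$ is defined by (\ref{FVdes}) — automatically lies in $A^\perp$. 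Consequently $P_{A^\perp}$ acts as the identity on each gradient, and the iterates $\{(\yvec^k,\pi^k,\fvec^k)\}_{k\ge 0}$ stay in the closed subspace $A^\perp$; Theorem \ref{th-1} provides a (unique) zero $u^\star \in A^\perp$ of $E_{\mathbf{s}_\A}$.

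The crucial analytical ingredient, and what I expect to be the main obstacle, is a coercivity estimate
\begin{equation*}
\|\mathbf{T}u\|_W^2 \geq \alpha \|u\|_{\A_0}^2 \qquad \forall u \in A^\perp,
\end{equation*}
for some $\alpha>0$, or equivalently the closedness of $\mathbf{T}(\A_0)$ in $W$. Continuity and injectivity of $\mathbf{T}|_{A^\perp}$ are built into the definitions, so this estimate is precisely what converts the obvious strict convexity of $E_{\mathbf{s}_\A}$ on the quotient $\A_0/A$ into a quantitative lower bound. Proving it amounts to exhibiting a bounded right inverse of $\mathbf{T}$ on its range — a quantitative sharpening of Theorem \ref{th-1}. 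A natural strategy is to decompose the lift into (i) the solution of the elliptic problem (\ref{corrector_unsteady}) that reconstructs the residual $\yvec_t - \nu\Delta\yvec + \nabla\pi - \fvec\,1_\omega$ from $\vvec$, (ii) a bounded right inverse of the divergence to adjust $\nabla\cdot\yvec$, and (iii) a Fursikov--Imanuvilov-type lift that absorbs the remaining discrepancy through the control $\fvec$, each contributing a bounded piece of the $\A_0$-norm.

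Once coercivity is in hand, convergence reduces to the standard analysis of gradient descent for a quadratic functional with coercive Hessian, which should be the content of the general-purpose Lemma \ref{basic}. Setting $\delta^k := (\yvec^k,\pi^k,\fvec^k) - u^\star \in A^\perp$, one reads from (\ref{minimizing_sequence})--(\ref{FVdes}) that $\delta^{k+1} = (I-\eta_k H)\delta^k$, where $H$ is the self-adjoint, bounded, $\alpha$-coercive Hessian of $E_{\mathbf{s}_\A}|_{A^\perp}$. Choosing descent steps $\eta_k$ in a fixed subinterval of $(0,2/\|H\|)$ yields $\|I-\eta_k H\|_{A^\perp \to A^\perp} \leq \kappa < 1$ uniformly in $k$, hence geometric decay $\|\delta^k\|_{\A_0} \to 0$. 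Adding back $\mathbf{s}_\A$ and invoking Proposition \ref{Prop1} one concludes that $\mathbf{s}_\A + u^\star$ is a controlled solution of (\ref{stokes}) and solves the extremal problem (\ref{extremal_pb}).
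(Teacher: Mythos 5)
Your argument hinges entirely on the coercivity estimate $\Vert\mathbf{T}u\Vert^2\ge\alpha\Vert u\Vert^2_{\A_0}$ for $u\in A^\perp$ (equivalently, closedness of $\mathbf{T}(\A_0)$ in $\Hvec^1(Q_T)\times L^2(Q_T)$), and this is precisely where it breaks down. You correctly flag it as the main obstacle, but you do not prove it: you only sketch a three-step lifting strategy whose key ingredient, a Fursikov--Imanuvilov-type lift bounded from the image space back into $\A_0$, is a \emph{quantitative} controllability statement in these particular norms that is not available. The paper is explicit on this point: the remark following Lemma \ref{basic} states that the error may fail to be coercive even when restricted to $u_0+A^\perp$, and that convergence rates may be impossible to establish --- which is incompatible with the geometric decay $\Vert\delta^k\Vert\le\kappa^k\Vert\delta^0\Vert$ you deduce from an $\alpha$-coercive Hessian. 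Structurally, $E_{\mathbf{s}_\A}$ only controls the Stokes residual in a weak $\Hvec^{-1}(Q_T)$-type norm together with $\nabla\cdot\yvec$ in $L^2(Q_T)$, whereas the $\A_0$-norm controls $\yvec$, $\yvec_t$, $\pi$ and $\fvec$ in much stronger topologies under the terminal constraint $\yvec(\cdot,T)=\mathbf{0}$; there is no reason for the former to dominate the latter uniformly, and for parabolic problems one expects it not to.

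The coercivity-free parts of your argument are correct and coincide with the paper's: the invariance of $E_{\mathbf{s}_\A}$ under translations by $A$, hence that the Riesz representative of the gradient automatically lies in $A^\perp$ and $P_{A^\perp}$ is redundant on the gradients; the strict convexity of $E_{\mathbf{s}_\A}$ on $u_0+A^\perp$; and the existence of a zero of $E_{\mathbf{s}_\A}$ via Theorem \ref{th-1}. The paper closes the argument differently, via Lemma \ref{basic}(iii): because the minimum value is zero, the section $t\mapsto E(u_0+t\overline u+(1-t)u)$ is a nonnegative parabola minimized at $t=1$, which yields the variational inequality $\langle E'(u_0+u),\overline u-u\rangle\le0$; this makes $\Vert u(t)-\overline u\Vert$ non-increasing along the gradient flow (a Fej\'er-type monotonicity), giving uniform boundedness of the iterates in $\A_0$, and convergence to the unique minimizer then follows from the strict convexity on $u_0+A^\perp$. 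To repair your proof you should replace the coercivity step by this monotonicity argument, accepting that no rate of convergence is obtained.
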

\vskip 0.1cm
This proposition is the consequence of the following abstract result which can be adapted to many different situations where this variational perspective can be of help. 
\vskip 0.1cm
 \begin{lemma}\label{basic}
Suppose $\mathbf{T}:X\mapsto Y$ is a linear, continuous operator between Hilbert spaces, and $H\subset X$, a closed subspace, $u_0\in X$. Put
$$
E: u_0+H\mapsto\mathbb{R}^+,\quad E(u)=\frac12\|\mathbf{T}u\|^2,\quad A=\hbox{Ker}\,\mathbf{T}\cap H.
$$
\begin{enumerate}
\item 
$E: u_0+A^\perp\to\mathbb{R}$ is quadratic, non-negative, and strictly convex, where $A^\perp$ is the orthogonal complement of $A$ in $H$. 
\item If we regard $E$ as a functional defined on $H$, $E(u_0+\cdot)$, and identify $H$ with its dual, then the derivative $E'(u_0+\cdot)$ always belongs to $A^\perp$. In particular, a typical steepest descent procedure for $E(u_0+\cdot)$ will always stay in the manifold $u_0+A^\perp$.
\item If, in addition, $\min_{u\in H} E(u_0+u)=0$, then the steepest descent scheme will always produce sequences converging (strongly in $X$) to a unique (in $u_0+A^\perp$) minimizer $u_0+\overline u$ with zero error.
\end{enumerate}
\end{lemma}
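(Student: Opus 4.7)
Plan for Lemma \ref{basic}.

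The overall strategy is to recognize the iteration as a Landweber scheme in disguise. Write $E(u_0+h)=\tfrac12\|\mathbf{T}u_0+\mathbf{T}h\|_Y^2$, which is patently a non‑negative quadratic polynomial in $h$, giving the first assertions of (1) for free. The core observation for strict convexity is that $\mathbf{T}$ is injective on $A^\perp$: if $h\in A^\perp$ and $\mathbf{T}h=0$, then $h\in\ker\mathbf{T}\cap H=A$, so $h\in A\cap A^\perp=\{0\}$. Consequently, for $h_1\ne h_2$ in $A^\perp$, $\mathbf{T}h_1\ne \mathbf{T}h_2$ and strict convexity of $\tfrac12\|\cdot\|_Y^2$ pulled back through $\mathbf{T}$ gives strict convexity of $E$ on $u_0+A^\perp$.

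For (2), a direct expansion yields the Fréchet derivative $\langle E'(u_0+h),k\rangle=\langle \mathbf{T}(u_0+h),\mathbf{T}k\rangle_Y$ for every $k\in H$. Identifying $H$ with its dual via Riesz, the gradient $g(h)\in H$ is the unique element satisfying $\langle g(h),k\rangle_H=\langle \mathbf{T}(u_0+h),\mathbf{T}k\rangle_Y$. Testing with $a\in A\subset\ker\mathbf{T}$ gives $\mathbf{T}a=0$ and hence $\langle g(h),a\rangle_H=0$, so $g(h)\in A^\perp$. Then any iteration $h_{k+1}=h_k-\eta_k g(h_k)$ preserves membership in $A^\perp$, as claimed.

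For (3), assume the minimum is attained; by strict convexity on the closed affine subspace $u_0+A^\perp$ the minimizer there, call it $u_0+\overline u$, is unique, and $\mathbf{T}(u_0+\overline u)=0$. Setting $e_k:=h_k-\overline u\in A^\perp$, the iteration becomes $e_{k+1}=e_k-\eta_k\,\mathbf{S}^*\mathbf{S}e_k$, where $\mathbf{S}:=\mathbf{T}|_H$ and the adjoint is taken with respect to the $H$‑inner product; this is literally the Landweber iteration for the equation $\mathbf{S}e=0$ started inside $\ker\mathbf{S}^\perp=A^\perp$. For either a constant step $\eta_k\equiv\eta\in(0,2/\|\mathbf{S}\|^2)$ or the optimal Cauchy step $\eta_k=\|\mathbf{S}e_k\|_Y^2/\|\mathbf{S}^*\mathbf{S}e_k\|_H^2$, the energy identity
\begin{equation*}
\|e_{k+1}\|_H^2=\|e_k\|_H^2-(2\eta_k-\eta_k^2\|\mathbf{S}\|^2)\,\|\mathbf{S}e_k\|_Y^2+O(\cdot)
\end{equation*}
forces $\{\|e_k\|_H\}$ monotone decreasing and $\sum_k\|\mathbf{S}e_k\|_Y^2<\infty$. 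From boundedness, a subsequence $e_{k_j}\rightharpoonup e_\infty$ weakly in $H$; weak lower semicontinuity of $\|\mathbf{S}\cdot\|_Y$ and $\mathbf{S}e_{k_j}\to 0$ yield $e_\infty\in\ker\mathbf{S}\cap A^\perp=\{0\}$, so the whole sequence converges weakly to $0$.

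The genuine difficulty is upgrading weak to strong convergence, since $\mathbf{S}|_{A^\perp}$ is injective but need not be bounded below (no closed range hypothesis is available). The standard route is spectral: $\mathbf{S}^*\mathbf{S}$ is bounded, self‑adjoint and non‑negative on $A^\perp$, and in its spectral representation the iteration acts as the multiplier $(1-\eta_k\lambda)^k$. These filter functions are uniformly bounded by $1$, tend to $0$ pointwise on $(0,\|\mathbf{S}\|^2]$, and the projection of $e_0$ onto the kernel is zero because $e_0\in A^\perp$; dominated convergence in the spectral measure then gives $\|e_k\|_H\to 0$. This spectral step is the only non‑elementary ingredient, and it is exactly the place where the assumption $\min E=0$ (i.e.\ the consistency of the underlying linear equation) is essential, since without a true solution the orbit could spread mass to arbitrarily small eigenvalues and convergence would only be weak.
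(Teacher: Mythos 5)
Your proof is correct, and for part (3) it takes a genuinely different route from the paper's. Parts (1) and (2) coincide in substance with the paper (injectivity of $\mathbf{T}$ on $A^\perp$ via $A\cap A^\perp=\{0\}$ for strict convexity; the gradient annihilates $A$ because $E$ is invariant under translations by $\mathrm{Ker}\,\mathbf{T}$). For part (3) the paper argues via the variational inequality $\langle E'(u_0+u),\overline u-u\rangle\le 0$ and the gradient flow, obtaining that $\|u(t)-\overline u\|$ is non-increasing, hence that steepest descent sequences are bounded and minimizing, and then invokes strict convexity on $u_0+A^\perp$ to conclude convergence to the unique minimizer. Your Landweber reformulation $e_{k+1}=(I-\eta_k\mathbf{S}^*\mathbf{S})e_k$ with $e_0\in A^\perp=(\ker\mathbf{S})^\perp$, followed by the spectral filter argument $(1-\eta\lambda)^{2k}\to 0$ on $(0,\|\mathbf{S}\|^2]$ and dominated convergence, is in fact the more complete argument: in infinite dimensions, ``bounded $+$ minimizing $+$ strictly convex'' by itself only yields weak convergence of the iterates (strong convergence of minimizing sequences can fail for non-coercive strictly convex quadratics, which is precisely the caveat the paper itself raises in the remark following the lemma), whereas your spectral step genuinely delivers the strong convergence claimed in (iii). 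The price is that your argument is tied to the explicit linear-quadratic structure and to a step-size restriction $\eta_k\in(0,2/\|\mathbf{S}\|^2)$ (or the exact line-search step), and your treatment of the variable-step case is only sketched --- the product filter $\prod_j(1-\eta_j\lambda)$ requires a separate, slightly more careful pointwise-decay argument than the constant-step power $(1-\eta\lambda)^k$. Also note that your displayed energy identity should be an inequality obtained from $\|\mathbf{S}^*\mathbf{S}e_k\|_H^2\le\|\mathbf{S}\|^2\|\mathbf{S}e_k\|_Y^2$; the ``$O(\cdot)$'' term as written is spurious, though harmless.
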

\par\noindent
\textsc{Proof of Lemma \ref{basic}-} Suppose there are $u_i\in A^\perp$, $i=1, 2$, such that 
$$
E\left(u_0+\frac12u_1+\frac12u_2\right)=\frac12E(u_0+u_1)+\frac12E(u_0+u_2).
$$
Due to the strict convexity of the norm in a Hilbert space, we deduce that this equality can only occur if 
$\mathbf{T}u_1=\mathbf{T}u_2$. So therefore $u_1-u_2\in A\cap A^\perp=\{0\}$, and $u_1=u_2$. For the second part, note that for arbitrary $U\in A$, $\mathbf{T}U=0$, and so 
$$
E(u_0+u+U)=\frac12\|\mathbf{T}u_0+\mathbf{T}u+\mathbf{T}U\|^2=\frac12\|\mathbf{T}u_0+\mathbf{T}u\|^2=E(u_0+u).
$$
Therefore the derivative $E'(u_0+u)$, the steepest descent direction for $E$ at $u_0+u$, has to be orthogonal to all such $U\in A$. 

Finally, assume $E(u_0+\overline u)=0$. It is clear that this minimizer is unique in $u_0+A^\perp$ (recall the strict convexity in (i)). This, in particular, implies that for arbitrary $u\in A^\perp$, 
\begin{equation}\label{desigualdad}
\langle E'(u_0+u), \overline u-u\rangle\le0,
\end{equation}
because this inner product is the derivative of the section $t\mapsto E(u_0+t\overline u+(1-t)u)$ at $t=0$, and this section must be a positive parabola with the minimum point at $t=1$. If we consider the gradient flow 
$$
u'(t)=-E'(u_0+u(t)),\quad t\in[0, +\infty),
$$
then, because of (\ref{desigualdad}), 
$$
\frac d{dt}\left(\frac12\|u(t)-\overline u\|^2\right)=\langle u(t)-\overline u, u'(t)\rangle=
\langle u(t)-\overline u, -E^{\prime}(u_0+u(t))\rangle\le0.
$$
This implies that sequences produced through a steepest descent method will be minimizing for $E$, uniformly bounded in $X$ (because $\|u(t)-\overline u\|$ is a non-increasing function of $t$), and due to the strict convexity of $E$ restricted to $u_0+A^\perp$, they will have to converge towards the unique minimizer $u_0+\overline u$. 
\qed
\vskip 0.1cm
\begin{remark}
Despite the strong convergence in this statement, it may not be true that the error is coercive, even restricted to $u_0+A^\perp$, so that strong convergence could be very slow. Because of this same reason, it may be impossible to establish rates of convergence for these minimizing sequences. 
\end{remark}
\vskip 0.1cm
The element $u_0$ determines the non-homogeneous data set of each problem: source term, boundary conditions, initial and/or final condition, etc. The subspace $H$ is the subset of the ambient Hilbert space $X$ for which the data set vanishes. $\mathbf{T}$ is the operator defining the corrector, so that $\textrm{Ker}\,\mathbf{T}$ is the subspace of all solutions of the underlying equation or system. The subspace $A$ is the subspace of all solutions of the problem with vanishing data set. In some situations $A$ will be trivial, but in some others will not be so. 
The important property is (iii) in the statement guaranteeing that we indeed have strong convergence in $X$ of iterates. The main requirement for this to hold is to know, \textit{a priori}, that the error attains its minimum value zero somewhere, which in the situation treated here is guaranteed by  Theorem \ref{th-1}. 
\vskip 0.1cm
\textsc{Proof of Proposition \ref{convergence}-} The result is obtained by applying Lemma \ref{basic} as follows. If we put $B=\{\yvec\in \Lvec^2(0,T,\Hvec_0^1(\Omega)): \yvec_t\in \Lvec^2(0,T; \Hvec^{-1}(\Omega))\},$
  $X$ is taken to be $B\times L^2(0, T; U)\times \Lvec^2(q_T)$. $H$ is taken to be $\A_0$ as given in (\ref{spaceA0}) and $u_0=\boldsymbol{s_\A}\in \mathcal{A}\subset X$. The operator $\mathbf{T}$ maps a triplet $(\yvec, \pi, \fvec)\in \A\subset X$ into $(\vvec,\nabla\cdot \yvec)\in Y:=\Hvec^1(Q_T)\times L^2(Q_T)$ as explained earlier.  

\begin{remark}
The construction of the minimizing sequence only requires the resolution of standard well-posed elliptic problems over $Q_T$, well-adapted to general situations (time dependent support, mesh adaptation, etc). On the other hand, it is important to highlight that the $\Lvec^2(q_T)$ control function $\fvec$ obtained from the minimizing procedure does not \textit{a priori} minimize any specific norm (for instance the $\Lvec^2$-norm).  
\end{remark}

Without the projection on $(\textrm{Ker}\,\mathbf{T}\cap \A_0)^{\perp}$ in (\ref{minimizing_sequence}), the sequence $(\yvec^k,\pi^k,\fvec^k)$ remains a minimizing sequence for $E_{\mathbf{s}_A}$: actually, the values of the cost $E_{\mathbf{s}_\A}$ along the sequence $(\yvec^k,\pi^k,\fvec^k)$ are equal with or without the projection. This is due to the fact that the component of the descent direction $(\overline{\yvec}^k,\overline{\pi}^k,\overline{\fvec}^k)$ on $(\textrm{Ker}\,\mathbf{T}\cap \A_0)$ does not affect the value of the cost : on the other hand, without the projection, the minimizing sequence may not be bounded uniformly in $\A_0$, in particular the control function $\fvec$ may not be bounded in $\Lvec^2(q_T)$. 

The subset $A^\perp$ is not explicit, so that in practice the projection $P_{A^{\perp}}(\overline{\yvec}^k,\overline{\pi}^k,\overline{\fvec}^k)$ may be defined by $P_{A^{\perp}}(\overline{\yvec}^k,\overline{\pi}^k,\overline{\fvec}^k)=(\overline{\yvec}^k,\overline{\pi}^k,\overline{\fvec}^k)-\boldsymbol{p}$, where $\boldsymbol{p}$ solves the extremal problem : 
\begin{equation}
\min_{\boldsymbol{p}\in A} \Vert \boldsymbol{p} - (\overline{\yvec}^k,\overline{\pi}^k,\overline{\fvec}^k) \Vert_{\A_0}.   \label{projection_pb}
\end{equation}
Recalling that $A$ is by definition the set of triplets $(\yvec,\pi,\fvec)$ satisfying $\yvec_t-\nu\Delta \yvec+\nabla \pi - \fvec \,1_{\omega}=0$, $\nabla\cdot \yvec=0$ in $Q_T$ such that $\yvec$ vanishes on $\partial Q_T$, this extremal problem is nothing else than a controllability problem for the Stokes system, similar to the one considered in this work. Therefore, we shall bypass this projection and shall introduce instead a stopping criteria for the descent method measuring how far from $A^{\perp}$ the descent direction is.  

We refer to \cite{munch-mcss} for numerical experiments. 

\section{Direct problem for the steady Navier-Stokes system}\label{sec4}

The least-squares approach allows to address non-linear problem. We consider here simply the steady Navier-Stokes and address the direct problem: find $(\yvec,\pi)$ solution of 
\begin{equation}\label{navierstokes}
\left\{
\begin{aligned}
 &     - \nu\Delta \yvec + (\yvec\cdot \nabla)\yvec + \nabla \pi = \fvec, \quad  \nabla\cdot\yvec=0  \quad \text{in}\ \ \Omega \\
  &   \yvec = \textbf{0} \quad \text{on}\ \ \partial\Omega.
\end{aligned}
\right.
\end{equation}
  We recall the following result  (see \cite{Temam}) :  
 \begin{theorem}\label{th-Temam}
  For any $\fvec\in \Hvec^{-1}(\Omega)$, there exists at least one $(\yvec,\pi)\in \Hvec_0^1(\Omega)\times L^2_0(\Omega)$ solution of (\ref{navierstokes}). Moreover, if $\nu^{-2}\Vert \fvec\Vert_{\Hvec^{-1}(\Omega)}$ is small enough, then the couple $(\yvec,\pi)$ is unique. 
\end{theorem}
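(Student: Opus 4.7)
The plan is the classical Galerkin approach, following Leray. First, I would pass to the divergence-free weak formulation: find $\yvec \in \Vvec$ such that
\begin{equation*}
\nu \int_\Omega \nabla \yvec : \nabla \boldsymbol{\varphi}\, d\xvec + b(\yvec, \yvec, \boldsymbol{\varphi}) = \langle \fvec, \boldsymbol{\varphi}\rangle_{\Hvec^{-1},\Hvec_0^1}, \qquad \forall \boldsymbol{\varphi} \in \Vvec,
\end{equation*}
where $b(\yvec, \vvec, \wvec) := \int_\Omega ((\yvec \cdot \nabla)\vvec)\cdot \wvec\, d\xvec$. The pressure drops out thanks to $\nabla \cdot \boldsymbol{\varphi} = 0$ and will be recovered at the end via De Rham's theorem applied to the distribution $-\nu \Delta \yvec + (\yvec \cdot \nabla)\yvec - \fvec \in \Hvec^{-1}(\Omega)$, which vanishes on $\Vvec$ and is thus the gradient of some $\pi \in L^2_0(\Omega)$.

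For existence, I would fix a Hilbert basis $\{\wvec_j\}_{j \geq 1}$ of $\Vvec$ (for instance the Stokes eigenfunctions) and seek $\yvec_m = \sum_{j=1}^m c_j^m \wvec_j$ satisfying the Galerkin projection of the weak formulation. The key structural identity $b(\zvec, \zvec, \zvec) = 0$ for every $\zvec \in \Vvec$, obtained by integration by parts combined with $\nabla \cdot \zvec = 0$, supplies both the a priori estimate and a Brouwer-type fixed point. Defining $P_m$ on $\textrm{span}\{\wvec_1, \ldots, \wvec_m\}$ by
\begin{equation*}
\langle P_m(\vvec), \boldsymbol{\varphi}\rangle := \nu (\nabla\vvec, \nabla\boldsymbol{\varphi}) + b(\vvec, \vvec, \boldsymbol{\varphi}) - \langle \fvec, \boldsymbol{\varphi}\rangle,
\end{equation*}
one finds $\langle P_m(\vvec), \vvec\rangle \geq \|\vvec\|_{\Hvec_0^1}\bigl(\nu \|\vvec\|_{\Hvec_0^1} - \|\fvec\|_{\Hvec^{-1}}\bigr)$, which is strictly positive on the sphere of radius $R > \nu^{-1}\|\fvec\|_{\Hvec^{-1}}$. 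A standard Brouwer argument then produces a zero $\yvec_m$ inside that ball, with the uniform bound $\|\yvec_m\|_{\Hvec_0^1} \leq \nu^{-1}\|\fvec\|_{\Hvec^{-1}}$.

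The most delicate step is passing to the limit $m \to \infty$ in the nonlinear term. From the uniform bound one extracts a subsequence converging weakly in $\Hvec_0^1(\Omega)$ to some $\yvec \in \Vvec$; the Rellich--Kondrachov embedding $\Hvec_0^1(\Omega) \hookrightarrow \Lvec^4(\Omega)$ (valid for $N \leq 3$) yields strong convergence in $\Lvec^4(\Omega)$, which combined with the weak convergence of $\nabla \yvec_m$ in $\Lvec^2(\Omega)$ is exactly what is needed to pass to the limit in $b(\yvec_m, \yvec_m, \wvec_j)$ for each fixed test function $\wvec_j$, and hence in the whole Galerkin identity. This is the only place the dimensional restriction $N \in \{2,3\}$ really bites.

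For uniqueness under the smallness hypothesis, let $(\yvec_i, \pi_i)$, $i = 1, 2$, be two solutions, set $\wvec := \yvec_1 - \yvec_2 \in \Vvec$, subtract the two weak formulations and test against $\wvec$. Using $b(\yvec_1, \wvec, \wvec) = 0$ I obtain
\begin{equation*}
\nu \|\nabla \wvec\|_{\Lvec^2}^2 = -b(\wvec, \yvec_2, \wvec),
\end{equation*}
which combined with the continuity estimate $|b(\wvec, \yvec_2, \wvec)| \leq C_\Omega \|\wvec\|_{\Hvec_0^1}^2 \|\yvec_2\|_{\Hvec_0^1}$ and the a priori bound $\|\yvec_2\|_{\Hvec_0^1} \leq \nu^{-1}\|\fvec\|_{\Hvec^{-1}}$ forces $\wvec = 0$ as soon as $C_\Omega \nu^{-2}\|\fvec\|_{\Hvec^{-1}} < 1$. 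Equality of the pressures then follows from De Rham's theorem applied to the difference equation.
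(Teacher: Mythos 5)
Your argument is correct and is precisely the classical Galerkin--Brouwer proof of Temam (and Leray) that the paper invokes: the paper itself gives no proof of Theorem \ref{th-Temam}, merely recalling it from the cited reference, and your reconstruction (energy identity $b(\zvec,\zvec,\zvec)=0$, Brouwer argument on finite-dimensional spheres, compactness in $\Lvec^4$ for $N\le 3$ to pass to the limit in the trilinear term, De Rham for the pressure, and the smallness condition $C_\Omega\,\nu^{-2}\Vert\fvec\Vert_{\Hvec^{-1}}<1$ for uniqueness) matches that standard proof in all essentials.
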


In order to solve this boundary value problem, we use a least-squares type approach. We consider the space  $\A=\Hvec_0^1(\Omega)\times L^2(\Omega)$ and then we define the functional $E:\A\to\mathbb{R}^+$ by 
\begin{equation}\label{foncEbis}
E(\yvec,\pi):=\frac{1}{2}\int_{\Omega} (\vert\nabla \vvec\vert^2 + \vert\nabla\cdot\yvec\vert^2)\,d\xvec
\end{equation}
where the corrector $\vvec$ is the unique solution in $\Hvec_0^{1}(Q_T)$ of the (elliptic) boundary value problem
\begin{equation}\label{corrector_unsteadybis}
 \left\{
 \begin{aligned}
 & -\Delta \vvec + (-\nu \Delta \yvec + div(\yvec \otimes \yvec) + \nabla \pi -\fvec) =0, \quad \textrm{in}\quad \Omega,\\
 & \vvec=0\quad \textrm{on}\quad\partial\Omega.
 \end{aligned}
 \right.
 \end{equation}

We then consider the following extremal problem
 
\begin{equation} \label{extremal_pbbis}
\inf_{(\yvec,\pi)\in \A} E(\yvec,\pi).
\end{equation} 

We recall the following equality
\begin{lemma}\label{lemma1}
\begin{itemize}
\item  For all $\yvec,\zvec$, $div(\yvec\otimes \zvec)=\yvec  \nabla\cdot \zvec + (\nabla \yvec) \zvec.$
\item For all $\yvec, \zvec, \pvec \in \Hvec_0^1(\Omega)$, 
\begin{equation}
\begin{aligned}
&\int_{\Omega} \biggl(div(\yvec\otimes \zvec) +  div(\zvec\otimes \yvec)\biggr)\cdot \pvec = -\int_{\Omega} (\yvec\otimes \zvec + \zvec\otimes \yvec):\nabla \pvec
\\
&= - \int_{\Omega}  \biggl((\nabla \pvec +(\nabla \pvec)^T)\yvec\biggr)\cdot \zvec 
\end{aligned}
\end{equation}
\end{itemize}
\end{lemma}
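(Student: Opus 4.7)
The plan is to treat the two items separately; both are elementary computations and neither should present any real difficulty.

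For the first item, the identity is just the Leibniz rule applied componentwise. With the convention $(\yvec\otimes\zvec)_{ij}=y_iz_j$ and the divergence taken along the second index, one has $(\dv(\yvec\otimes\zvec))_i=\partial_j(y_iz_j)=y_i\partial_jz_j+z_j\partial_jy_i$, which reads $y_i(\nabla\cdot\zvec)+((\nabla\yvec)\zvec)_i$ with $(\nabla\yvec)_{ij}=\partial_jy_i$. For $\yvec,\zvec\in\Hvec^1(\Omega)$ the equality is to be understood in the distributional sense (equivalently, pointwise almost everywhere), and the right-hand side belongs to $\Lvec^{3/2}$ or better in $N=2,3$ by Sobolev embedding, so no ambiguity arises.

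For the second item, I would first establish the standard integration-by-parts equality
\[
\int_\Omega \dv(\yvec\otimes\zvec)\cdot\pvec\,d\xvec=-\int_\Omega(\yvec\otimes\zvec):\nabla\pvec\,d\xvec.
\]
Since $\pvec\in\Hvec_0^1(\Omega)$ has vanishing trace, and by Sobolev embedding in dimension $N=2,3$ the product $\yvec\otimes\zvec$ belongs to $\Lvec^2(\Omega)$, the right-hand side is well-defined and Green's formula applies (after smooth approximation and passage to the limit). Summing this equality with the one obtained by interchanging $\yvec$ and $\zvec$ yields the first equality of the claim.

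For the second equality, I would expand the Frobenius contraction in coordinates. One has $(\yvec\otimes\zvec):\nabla\pvec = y_iz_j\partial_jp_i$ and $(\zvec\otimes\yvec):\nabla\pvec = z_iy_j\partial_jp_i$. Relabeling the dummy indices so that the common free indices in the two terms match, the sum becomes $y_jz_i(\partial_ip_j+\partial_jp_i)=((\nabla\pvec+(\nabla\pvec)^T)\yvec)\cdot\zvec$, which is exactly the claimed identity. The only points requiring a little care are fixing the convention for $\nabla\pvec$ (so that the transposition in $\nabla\pvec+(\nabla\pvec)^T$ corresponds to swapping the two indices of $\partial_jp_i$) and justifying the integration by parts by density of $\Cvec_c^{\infty}(\Omega)$ in $\Hvec_0^1(\Omega)$; neither step poses a genuine obstacle.
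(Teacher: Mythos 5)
Your proof is correct. The paper itself gives no argument for this lemma --- it is simply ``recalled'' as a standard identity and used later in the proof of Proposition \ref{functionalerror} --- so there is no authorial proof to compare against; your componentwise verification is exactly the computation the authors are implicitly relying on. The three steps (Leibniz rule $\partial_j(y_iz_j)=y_i\partial_jz_j+z_j\partial_jy_i$ for the first item; integration by parts against $\pvec\in\Hvec_0^1(\Omega)$ plus symmetrization for the first equality of the second item; index relabeling in $y_iz_j\partial_jp_i+z_iy_j\partial_jp_i$ for the second equality) are all carried out correctly, and you rightly flag the two points that actually need attention: the convention $(\nabla\pvec)_{ij}=\partial_jp_i$ consistent with the transposition in $\nabla\pvec+(\nabla\pvec)^T$, and the Sobolev embedding $\Hvec_0^1(\Omega)\hookrightarrow\Lvec^6(\Omega)$ for $N\le 3$ that makes $\yvec\otimes\zvec\in\Lvec^2(\Omega)$ and justifies the density/limit argument for Green's formula. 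Nothing is missing.
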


From Theorem \ref{th-Temam}, the infimum is equal to zero and is reached by an element solution of (\ref{navierstokes}). Conversely, we would like to state that the only critical point for $E$ correspond to solution of (\ref{navierstokes}). 

In view of \cite[Corollary 1.8]{pedregalSEMA}, let us first prove the following result.

\begin{proposition}\label{functionalerror} $E$ is an error functional, that is $E:\A\to \mathbb{R}$ is a $C^1$-functional over the Hilbert space $\A$, $E$ is non-negative and 
$$
\lim E(\yvec,\pi)=0 \quad \textrm{as}\quad  E^{\prime}(\yvec,\pi)\to 0.
$$
\end{proposition}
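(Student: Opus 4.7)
The plan is to verify the three ingredients---smoothness, non-negativity, and the limit condition---in turn. The first two are essentially routine: the solution map $(\yvec,\pi)\mapsto\vvec$ defined by (\ref{corrector_unsteadybis}) is a polynomial of degree two from $\A=\Hvec_0^1(\Omega)\times L^2(\Omega)$ into $\Hvec_0^1(\Omega)$, because $\yvec\otimes\yvec\in\Lvec^2(\Omega)$ by the Sobolev embedding $\Hvec_0^1\hookrightarrow\Lvec^4$ (valid for $N\leq 3$), so $-\nu\Delta\yvec+\mathrm{div}(\yvec\otimes\yvec)+\nabla\pi-\fvec\in\Hvec^{-1}(\Omega)$, and the negative Laplacian is an isomorphism $\Hvec^{-1}(\Omega)\to\Hvec_0^1(\Omega)$. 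By composition $E\in C^\infty(\A)$, and non-negativity is immediate from the definition.

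To compute the derivative, differentiate (\ref{corrector_unsteadybis}) in a direction $(\Yvec,\Pi)\in\A$ to see that the corresponding $\Vvec\in\Hvec_0^1$ solves $-\Delta\Vvec+(-\nu\Delta\Yvec+\mathrm{div}(\Yvec\otimes\yvec+\yvec\otimes\Yvec)+\nabla\Pi)=0$, whence $\langle E'(\yvec,\pi),(\Yvec,\Pi)\rangle=\int_\Omega\nabla\vvec:\nabla\Vvec+\int_\Omega(\nabla\cdot\yvec)(\nabla\cdot\Yvec)$. Two integrations by parts on the first term---whose boundary contributions vanish since $\vvec,\Vvec\in\Hvec_0^1$---combined with Lemma \ref{lemma1} applied to the convective piece produce the explicit identity
\[
\langle E'(\yvec,\pi),(\Yvec,\Pi)\rangle=-\nu\int_\Omega\nabla\vvec:\nabla\Yvec+\int_\Omega\bigl((\nabla\vvec+(\nabla\vvec)^T)\yvec\bigr)\cdot\Yvec+\int_\Omega\Pi\,(\nabla\cdot\vvec)+\int_\Omega(\nabla\cdot\yvec)(\nabla\cdot\Yvec).
\]

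The heart of the proof is the limit condition, which I would establish by testing $E'(\yvec,\pi)$ against two carefully chosen directions. First, $(\Yvec,\Pi)=(\vvec,0)$ yields, after rearrangement,
\[
\nu\|\nabla\vvec\|_{\Lvec^2}^2\leq |\langle E'(\yvec,\pi),(\vvec,0)\rangle|+C\|\yvec\|_{\Hvec^1}\|\nabla\vvec\|^2+\sqrt{N}\|\nabla\cdot\yvec\|\,\|\nabla\vvec\|,
\]
where the cubic cross-term is estimated by H\"older's inequality together with $\Hvec_0^1\hookrightarrow\Lvec^4$. Second, by Bogovski{\u\i}'s lifting (applicable since $\int_\Omega\nabla\cdot\yvec=0$ for $\yvec\in\Hvec_0^1$) there exists $\Yvec_0\in\Hvec_0^1(\Omega)$ with $\nabla\cdot\Yvec_0=\nabla\cdot\yvec$ and $\|\Yvec_0\|_{\Hvec^1}\leq C\|\nabla\cdot\yvec\|_{L^2}$; testing with $(\Yvec_0,0)$ isolates $\|\nabla\cdot\yvec\|^2$ modulo terms controlled by $\|E'(\yvec,\pi)\|_{\A'}\|\nabla\cdot\yvec\|$ and $(\nu+\|\yvec\|_{\Hvec^1})\|\nabla\vvec\|\,\|\nabla\cdot\yvec\|$. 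Combining both estimates and absorbing the cross-terms under the smallness assumption $C\|\yvec\|_{\Hvec^1}<\nu$ of Theorem \ref{th-Temam} closes the chain and yields the quantitative bound $2E(\yvec,\pi)\leq C\|E'(\yvec,\pi)\|_{\A'}^2$, which is stronger than the claimed limit behaviour.

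The principal obstacle is the nonlinear cross-term $\int_\Omega((\nabla\vvec+(\nabla\vvec)^T)\yvec)\cdot\vvec$: its natural bound $C\|\yvec\|_{\Hvec^1}\|\nabla\vvec\|^2$ has exactly the same homogeneity as the coercive contribution $\nu\|\nabla\vvec\|^2$, so it can only be absorbed when $\|\yvec\|_{\Hvec^1}$ is controlled by a multiple of $\nu$. This is precisely the uniqueness regime of Theorem \ref{th-Temam}; outside of it, the global characterisation of the zero set of $E$ by $E'=0$ should not be expected to survive, reflecting the possible non-uniqueness of solutions to the steady Navier--Stokes system.
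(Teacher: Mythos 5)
Your treatment of smoothness, non-negativity, and the derivative formula is correct and matches the paper. The gap is in the limit condition: you absorb the cubic term $\int_\Omega\bigl((\nabla\vvec+(\nabla\vvec)^T)\yvec\bigr)\cdot\vvec$ into $\nu\|\nabla\vvec\|^2$ under the hypothesis $C\|\yvec\|_{\Hvec^1}<\nu$, but that hypothesis is not available. The proposition concerns arbitrary $(\yvec,\pi)\in\A$ along which $E'\to 0$; nothing bounds $\|\yvec\|_{\Hvec^1}$ by a multiple of $\nu$ for such elements. The smallness condition of Theorem \ref{th-Temam} is a condition on $\nu^{-2}\|\fvec\|_{\Hvec^{-1}(\Omega)}$ that constrains \emph{solutions} of (\ref{navierstokes}), not members of a minimizing sequence, and in the paper it is invoked only in Proposition \ref{strongconvergence} to get uniqueness of the zero of $E$ --- not in the proof that $E$ is an error functional. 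So your argument establishes a strictly weaker statement, and your closing claim that the result ``should not be expected to survive'' outside the smallness regime is contradicted by the paper's own (smallness-free) proof; non-uniqueness of Navier--Stokes solutions only threatens the uniqueness of the minimizer, not the error-functional property.

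The paper avoids the absorption problem with a two-step argument you are missing. It first tests $E'$ against the solution $(\Yvec,\Pi)$ of an Oseen-type problem with right-hand side $\vvec$ (Lemma \ref{osseen}); since $\nabla\cdot\Yvec=0$, identity (\ref{eqv}) collapses the pairing to $E'(\yvec,\pi)\cdot(\Yvec,\Pi)=-\|\vvec\|^2_{\Lvec^2(\Omega)}$, and the boundedness of $(\Yvec,\Pi)$ then gives $\|\vvec\|_{\Lvec^2(\Omega)}\to 0$. Only afterwards does it test against $(\vvec,\Pi_s)$ with $\Pi_s=-(\nabla\cdot\yvec+\yvec\cdot\vvec)$: in the resulting identity (\ref{Eprime_Pis}) the dangerous term appears as $\int_\Omega(\vvec\otimes\vvec):\nabla\yvec$, quadratic in $\vvec$, and it vanishes in the limit by interpolation ($\|\vvec\|_{\Lvec^2}\to 0$ with $\|\vvec\|_{\Hvec^1}$ bounded via (\ref{estimate})), with no smallness required; this yields $\|\nabla\vvec\|_{\Lvec^2}\to 0$ and finally $\|\nabla\cdot\yvec\|_{L^2}\to 0$. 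A secondary point: even within your smallness regime, the claimed coercivity bound $2E\le C\|E'\|_{\A'}^2$ needs more care, since substituting the Bogovskii estimate for $\|\nabla\cdot\yvec\|$ back into your first inequality produces a term of order $\nu\|\nabla\vvec\|^2$ multiplied by the (not small) Bogovskii constant, which cannot simply be absorbed on the left.
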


\begin{proof}
We write that 
\begin{equation}
E^{\prime}(\yvec,\pi)\cdot (\Yvec,\Pi)=\int_\Omega \nabla \vvec\cdot \nabla \Vvec + (\nabla\cdot \yvec)(\nabla\cdot \Yvec) dx
\end{equation}
where $\Vvec\in \Hvec_0^1(\Omega)$ solves 

\begin{equation}\label{corrector_unsteadyqua}
 \left\{
 \begin{aligned}
 & -\Delta \Vvec + (-\nu \Delta \Yvec + div(\yvec \otimes \Yvec)+ div(\Yvec \otimes \yvec)+ \nabla \Pi) =0, \quad \textrm{in}\quad \Omega,\\
 & \Vvec=0\quad \textrm{on}\quad\partial\Omega.
 \end{aligned}
 \right.
 \end{equation}
 Multiplying the state equation by $\vvec$ and using (\ref{lemma1}), we get
 \begin{equation}
 \begin{aligned}
E^{\prime}(\yvec,\pi)\cdot (\Yvec,\Pi)=&\int_\Omega \nu \Delta \vvec \cdot\Yvec + (\nabla \vvec+(\nabla \vvec)^T) \yvec )\cdot \Yvec + (\nabla\cdot\vvec)\Pi  d\xvec\\
+ & \int_{\Omega} (\nabla\cdot \yvec)(\nabla\cdot \Yvec) d\xvec
\end{aligned}
\end{equation}

or equivalently
 \begin{equation}\label{eprime}
 \begin{aligned}
E^{\prime}(\yvec,\pi)\cdot (\Yvec,\Pi)=&\int_\Omega -\nu \nabla \vvec \cdot\nabla\Yvec + (\yvec\otimes \Yvec+\Yvec\otimes \yvec):\nabla \vvec+ (\nabla\cdot\vvec)\Pi  d\xvec\\
+ & \int_{\Omega} (\nabla\cdot \yvec)(\nabla\cdot \Yvec) d\xvec.
\end{aligned}
\end{equation}

We then check that we can take $\Yvec=\vvec$, i.e.  $\vvec=\vvec(\yvec,\pi)\in \Hvec_0^1(\Omega)$ uniquely given by the solution of (\ref{corrector_unsteadyqua}) remains bounded with respect to $(\yvec,\pi)\in \Hvec_0^1\times L^2(\Omega)$. By definition, the corrector $\vvec$ solves the variational formulation: 
\begin{equation}
\int_\Omega ((\nabla \vvec+\nu\nabla \yvec - \yvec\otimes \yvec):\nabla \wvec -\pi \nabla\cdot\wvec -\fvec\cdot\wvec) =0, \quad \forall \wvec\in \Hvec_0^1(\Omega)
\end{equation}
Taking $\wvec=\vvec$, we get that 
\begin{equation}
\int_{\Omega} \vert\nabla \vvec\vert^2 d\xvec=\int_{\Omega}  (\yvec\otimes \yvec:\nabla\vvec - \nu \nabla \yvec:\nabla \vvec +\pi \nabla\cdot \vvec +\fvec\cdot \vvec)d\xvec
\end{equation}
so that, in view of the Poincar\'e inequality, 
\begin{equation}\label{estimate}
\begin{aligned}
\Vert \vvec\Vert_{\Hvec_0^1(\Omega)} \leq C (\Vert \yvec \otimes \yvec\Vert_{L^2(\Omega)}+ \Vert \yvec\Vert_{\Hvec_0^1(\Omega)} + \Vert \pi\Vert_{L^2(\Omega)}+\Vert \fvec\Vert_{\Hvec^{-1}(\Omega)}).
\end{aligned}
\end{equation}
leading to the result. Then, taking $\Yvec=\vvec$ in (\ref{eprime}), we get 
\begin{equation}
 \begin{aligned}
E^{\prime}(\yvec,\pi)\cdot (\vvec,\Pi)=&\int_\Omega -\nu \vert \nabla \vvec \vert^2 - (\yvec\otimes \vvec+\vvec\otimes \yvec):\nabla \vvec+ (\nabla\cdot\vvec)\Pi  d\xvec\\
+ & \int_{\Omega} (\nabla\cdot \yvec)(\nabla\cdot \vvec) d\xvec \\
=& \int_\Omega -\nu \vert \nabla \vvec \vert^2 -   (\vvec\otimes \vvec):\nabla \yvec + \frac{1}{2} (\nabla\cdot \yvec)\vert \vvec\vert^2  d\xvec\\
+ & \int_{\Omega} (\nabla\cdot \vvec)(\nabla\cdot \yvec +\yvec\cdot \vvec +\Pi) d\xvec \\
\end{aligned}
\end{equation}

Similarly, in view of (\ref{estimate}),  $\Pi_s=-(\nabla\cdot \yvec +\yvec\cdot \vvec)\in L^2(\Omega)$ remains bounded with respect to $(\yvec,\pi)$ and we write
\begin{equation}\label{Eprime_Pis}
E^{\prime}(\yvec,\pi)\cdot (\vvec,\Pi_s)= \int_\Omega -\nu \vert \nabla \vvec \vert^2 -   (\vvec\otimes \vvec):\nabla \yvec + \frac{1}{2} (\nabla\cdot \yvec)\vert \vvec\vert^2  d\xvec
\end{equation}

We then use the following result (consequence of the well-posedness of the Oseen equation)

\begin{lemma}\label{osseen}
For any $\yvec\in \Hvec_0^1(\Omega)$, $\Fvec\in L^2(\Omega)$, there exists $(\Yvec,\Pi)\in H_0^1(\Omega)\times L^2(\Omega)$  with $\nabla\cdot\Yvec=0$ such that 
\begin{equation}
\int_\Omega  (\nu \nabla\Yvec - (\Yvec\otimes \yvec + \yvec\otimes \Yvec)):\nabla\wvec - \Pi \nabla\cdot\wvec - \Fvec\cdot\wvec =0, \quad \forall \wvec\in H_0^1(\Omega) 
\end{equation} 
such that  $\Vert \Yvec,\Pi\Vert_{\Hvec_0^1(\Omega)\times L^2(\Omega)} \leq C  ( \Vert \yvec\Vert_{\Hvec^1_0(\Omega)}\Vert + \Vert \Fvec\Vert_{L^2(\Omega)})$
for some $C>0$.
\end{lemma}

Using this lemma for $\Fvec=\vvec$ and $\wvec=\vvec$ ($\vvec$ is the corrector associated to the pair $(\yvec,\pi)$), we obtain that $(\Yvec,\Pi)\in \Hvec_0^1(\Omega)\times L^2(\Omega)$ satisfies $\nabla\cdot\Yvec=0$ and
\begin{equation}\label{eqv}
\int_\Omega  (\nu \nabla\Yvec - (\Yvec\otimes \yvec + \yvec\otimes \Yvec)):\nabla\vvec - \Pi \nabla\cdot\vvec - \vvec\cdot\vvec =0, \quad \forall \wvec\in H_0^1(\Omega) 
\end{equation}

With this pair $(\Yvec,\Pi)$ bounded with respect to $\vvec$ and to $\yvec$, and so with respect to $(\yvec,\pi)$, we have from (\ref{eprime}), (remind that $\nabla \cdot\Yvec=0$)

\begin{equation}
E^{\prime}(\yvec,\pi)\cdot (\Yvec,\Pi)=\int_\Omega -\nu \nabla \vvec \cdot\nabla\Yvec + (\yvec\otimes \Yvec+\Yvec\otimes \yvec):\nabla \vvec+ (\nabla\cdot\vvec)\Pi  d\xvec
\end{equation}
The property $E^{\prime}(\yvec,\pi)\cdot (\Yvec,\Pi)\to 0$ then implies that $\Vert \vvec\Vert_{L^2(\Omega)}\to 0$.  Then, from (\ref{Eprime_Pis}), the property 
$E^{\prime}(\yvec,\pi)\cdot (\vvec,\Pi_s)\to 0$ then implies from the equality (\ref{eqv}) that $\Vert \nabla\vvec\Vert_{L^2(\Omega)}\to 0$.    

Then, \ref{eprime} implies that $\int_{\Omega} \nabla\cdot \yvec  \nabla\cdot\Yvec d\xvec \to 0$   for all $\Yvec\in H_0^1(\Omega)$ so that $\Vert \nabla\cdot\yvec\Vert_{L^2(\Omega)}\to 0$.
\end{proof}

We then recall the following result. 

\begin{lemma}\cite[Corollary 1.8]{pedregalSEMA}
Let $H$ be a Hilbert space. Suppose $J:H\to \mathbb{R}$ is an error functional for which there is a unique $u^0\in H$ with $J(u^0)=0$. If $J(u^j)\to 0$, then $u^j\to u^0$ strongly in $H$, and $J$ complies with the Palais-Smale condition.
\end{lemma}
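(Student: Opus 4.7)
The plan is to combine Ekeland's variational principle with the defining property of the error functional and the uniqueness of $u^0$. Since $J$ is $C^1$, non-negative, and $J(u^j)\to 0=\inf_H J$, I would first apply Ekeland's principle with parameters $\varepsilon_j := J(u^j)$ and $\lambda_j := \sqrt{\varepsilon_j}$ to produce a companion sequence $\{v^j\}\subset H$ satisfying $\|v^j-u^j\|\le\sqrt{\varepsilon_j}\to 0$, $J(v^j)\le J(u^j)$, and $\|J'(v^j)\|_{H^*}\le\sqrt{\varepsilon_j}\to 0$. Thus $\{v^j\}$ is an approximate critical sequence at level zero, and the error-functional hypothesis confirms $J(v^j)\to 0$.

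To upgrade this to strong convergence $v^j\to u^0$, I would argue by contradiction: if $v^j\not\to u^0$, extract a subsequence with $\|v^j-u^0\|\ge\delta>0$, then a further sub-subsequence converging to some $v^\star$. Continuity of $J$ forces $J(v^\star)=0$, so $v^\star=u^0$ by uniqueness, contradicting $\|v^\star-u^0\|\ge\delta$. Since $\|u^j-v^j\|\to 0$, this also yields $u^j\to u^0$, establishing the first claim.

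The Palais-Smale condition follows essentially for free. Any sequence $\{w^j\}$ with $J(w^j)$ bounded and $J'(w^j)\to 0$ satisfies $J(w^j)\to 0$ by the error-functional property, and then the first claim forces $w^j\to u^0$ strongly---considerably stronger than the extraction of a strongly convergent subsequence required by the (PS) condition.

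The main obstacle is precisely the extraction of a strongly convergent sub-subsequence in the contradiction step. The bare definition of an error functional supplies no compactness on $H$, so one needs a structural input, typically a coercivity estimate on $J$ guaranteeing boundedness of $\{v^j\}$ combined with weak compactness plus a Kadec--Klee or weak lower semicontinuity argument; or else---as in the PDE applications such as Proposition \ref{functionalerror}---a concrete compact Sobolev embedding of the type used to control the nonlinear convective term via $\Hvec_0^1(\Omega)\hookrightarrow \Lvec^4(\Omega)$. Verifying this hidden compactness ingredient is the delicate step in any concrete invocation of the corollary.
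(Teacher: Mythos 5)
The paper itself offers no proof of this lemma: it is quoted verbatim from \cite[Corollary 1.8]{pedregalSEMA}, so there is no internal argument to compare your attempt against, and it must be judged on its own terms. On those terms it has a genuine gap, and it is exactly the one you flag at the end: in an infinite-dimensional Hilbert space you cannot ``extract a further sub-subsequence converging to some $v^\star$''. Nothing in the hypotheses gives even boundedness of $\{v^j\}$, and boundedness would only yield \emph{weak} sequential compactness, after which norm-continuity of $J$ is of no help. This missing step is not a refinement to be supplied later; it is the entire content of the lemma. Note also that if such an extraction were available, the Ekeland detour would be superfluous: the same contradiction argument could be run directly on $u^j$, since $J(u^j)\to 0$ and $J$ is continuous, so every strong limit point of $u^j$ is already a zero of $J$.

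The deeper structural problem is that your argument for the first claim never uses the error-functional hypothesis in an essential way: you invoke it only to ``confirm'' $J(v^j)\to 0$, which you already know from $J(v^j)\le J(u^j)\to 0$. But the first claim is false for a general $C^1$, non-negative functional with a unique zero --- on $H=\ell^2$ the functional $J(u)=\|u\|^2/(1+\|u\|^4)$ vanishes only at $0$, yet any sequence with $\|u^j\|\to\infty$ is minimizing. (This $J$ fails to be an error functional precisely because the unit sphere consists of critical points at level $1/2$.) A correct proof must therefore exploit the implication $J'(w^j)\to 0\Rightarrow J(w^j)\to 0$ to forbid escape of minimizing sequences, for instance by showing that if $\|u^j-u^0\|\ge 2\rho$ while $J(u^j)\to 0$, then a mountain-pass or quantitative-deformation construction between $u^0$ and $u^j$ manufactures a sequence $w^k$ with $J'(w^k)\to 0$ and $\liminf_k J(w^k)\ge\inf_{\|u-u^0\|=\rho}J$, which contradicts the hypothesis whenever that infimum is positive. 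Your deduction of the Palais--Smale condition from the first claim is correct, but it inherits the gap; and one should add that the citation may well suppress additional hypotheses (coercivity, weak lower semicontinuity) imposed on error functionals in \cite{pedregalSEMA}, whose verification in the setting of Proposition \ref{functionalerror} is a separate matter from the compactness issue you raise.
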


In view of Proposition \ref{functionalerror} and of the previous Lemma, we have the following  result.
\begin{proposition}\label{strongconvergence}
If $\nu^{-1}\Vert \fvec\Vert_{H^{-1}(\Omega)}$ is small enough, then any minimizing sequence 
$(\yvec_k,\pi_k)_{k>0}$ for $E$ converges strongly toward $(\yvec,\pi)$, unique solution of the boundary value problem (\ref{navierstokes}).
\end{proposition}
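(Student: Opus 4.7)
The plan is to deduce the proposition directly from the three ingredients already at hand: Theorem \ref{th-Temam}, Proposition \ref{functionalerror}, and the abstract Palais--Smale type corollary from \cite{pedregalSEMA} just quoted. Applied with $H=\A$ and $J=E$, that corollary requires $E$ to be an error functional possessing a unique zero $u^0$; it then returns strong convergence of any sequence $(u^j)$ with $J(u^j)\to 0$ toward $u^0$, which is precisely the statement we seek for minimizing sequences. So the whole task reduces to checking the three hypotheses: $E$ is an error functional, $E$ vanishes somewhere, and the zero is unique.

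First I would verify that $\inf_{\A} E=0$. Theorem \ref{th-Temam} provides a weak solution $(\yvec^\star,\pi^\star)\in \Hvec_0^1(\Omega)\times L_0^2(\Omega)$ of (\ref{navierstokes}); plugged into the corrector problem (\ref{corrector_unsteadybis}) the source term vanishes, whence $\vvec\equiv 0$, and since also $\nabla\cdot \yvec^\star=0$, it follows that $E(\yvec^\star,\pi^\star)=0$. Consequently any minimizing sequence satisfies $E(\yvec_k,\pi_k)\to 0$. Next I would invoke the smallness hypothesis on $\nu^{-1}\Vert\fvec\Vert_{\Hvec^{-1}(\Omega)}$: by the second part of Theorem \ref{th-Temam} the velocity $\yvec^\star$ is unique, and fixing the pressure constant by requiring zero mean (i.e.\ restricting the second factor of $\A$ to $L_0^2(\Omega)$, in the spirit of Remark \ref{divepsilon}) makes $(\yvec^\star,\pi^\star)$ the unique zero of $E$ in $\A$. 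Combined with Proposition \ref{functionalerror}, which gives the $C^1$ regularity, non-negativity and the implication $E^{\prime}(\yvec,\pi)\to 0 \Rightarrow E(\yvec,\pi)\to 0$, the hypotheses of the abstract corollary are all met, and its application concludes the proof.

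The main obstacle is precisely the uniqueness clause in the abstract corollary. For large data (\ref{navierstokes}) may carry several solutions, each of them a zero of $E$; a minimizing sequence could then drift between these solutions, and without extra input the best one can extract from $E(\yvec_k,\pi_k)\to 0$ is that every cluster point is \emph{some} solution of (\ref{navierstokes}). The smallness hypothesis $\nu^{-1}\Vert\fvec\Vert_{\Hvec^{-1}(\Omega)}\ll 1$ is exactly what collapses the zero set of $E$ to a single point and thus what promotes subsequential weak limits into full strong convergence in $\A$. A secondary bookkeeping issue is the pressure normalisation, needed to formulate uniqueness in $\A$ itself, but this is routine.
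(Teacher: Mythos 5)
Your proposal is correct and follows essentially the same route as the paper: the paper's proof likewise consists of applying the corollary from \cite{pedregalSEMA} with $J=E$ and $H=\A$, with uniqueness of the zero supplied by the smallness hypothesis in Theorem \ref{th-Temam}. Your additional remark about normalising the pressure (working in $L_0^2(\Omega)$ so that uniqueness actually holds in $\A$) is a legitimate bookkeeping point that the paper passes over silently.
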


\begin{proof} It suffices to apply the previous lemma for $J=E$ and $H=\mathcal{A}$. The uniqueness here follows from Theorem \ref{th-Temam}.
\end{proof}

\begin{remark}
We write again that 
\begin{equation}
E^{\prime}(\yvec,\pi)\cdot (\Yvec,\Pi)=\int_\Omega \nabla \vvec\cdot \nabla \Vvec + (\nabla\cdot \yvec)(\nabla\cdot \Yvec) dx
\end{equation}
where $\Vvec\in \Hvec_0^1(\Omega)$ solves the well-posed formulation

\begin{equation}\label{corrector_unsteadyter}
 \left\{
 \begin{aligned}
 & -\Delta \Vvec + (-\nu \Delta \Yvec + div(\yvec \otimes \Yvec)+ div(\Yvec \otimes \yvec)+ \nabla \Pi) =0, \quad \textrm{in}\quad \Omega,\\
 & \Vvec=0\quad \textrm{on}\quad\partial\Omega.
 \end{aligned}
 \right.
 \end{equation}
 Multiplying the state equation by $\vvec$ and using (\ref{lemma1}), we get
 \begin{equation}
 \begin{aligned}
E^{\prime}(\yvec,\pi)\cdot (\Yvec,\Pi)=&\int_\Omega \nu \Delta \vvec \cdot\Yvec + (\nabla \vvec+(\nabla \vvec)^T) \yvec )\cdot \Yvec + (\nabla\cdot\vvec)\Pi  d\xvec\\
+ & \int_{\Omega} (\nabla\cdot \yvec)(\nabla\cdot \Yvec) d\xvec
\end{aligned}
\end{equation}
so that, at the optimality, the corrector funcion $\vvec$ solves the following linear boundary problem : 
\begin{equation}\label{navierstokes_v}
\left\{
\begin{aligned}
 &      \nu\Delta \vvec +   (\nabla \vvec+(\nabla \vvec)^T) \yvec  - \nabla(\nabla\cdot \yvec)=0, \quad  \nabla\cdot\vvec=0  \quad \text{in}\ \ \Omega \\
  &   \vvec = 0 \quad \text{on}\ \ \partial\Omega
\end{aligned}
\right.
\end{equation}
Proposition \ref{functionalerror} implies that if $E^{\prime}(\yvec,\pi)=0$, then $E(\yvec,\pi)=0$.  Therefore, the formulation (\ref{navierstokes_v}) implies that $\vvec=0$ and $\nabla\cdot \yvec=0$. This may be seen as a unique continuation property for Navier-Stokes equation. 
\end{remark}

\begin{remark}
The previous results hold true if we replace the cost $E$ by 
\begin{equation}
E_{\eps}(\yvec,\pi):=\frac{1}{2}\int_{\Omega} (\vert\nabla \vvec\vert^2 + \vert\nabla\cdot\yvec+\eps \pi\vert^2)\,d\xvec
\end{equation}
for any $\eps>0$, where $\vvec$ solves (\ref{corrector_unsteadybis}).
\end{remark}

As a very interesting consequence, Proposition \ref{strongconvergence} reduces the approximation of the vanishing elements of $E$ to the construction of one arbitrary minimizing sequence, say $(\yvec_k,\pi_k)_{k>0}$, for $E$.

The unsteady case as well as the controllability case are more involved: we refer to \cite{munch-pedregal-NS} based on \cite{EFC-AM-long,Imanuvilov-NS,pedregalNS}.

\end{document}